\documentclass[12pt,reqno]{amsart}

\usepackage{multirow}
\usepackage{hhline}

\usepackage{mathtools}
\usepackage{times}
\usepackage[T1]{fontenc}
\usepackage{mathrsfs}
\usepackage{latexsym}
\usepackage[titletoc, title]{appendix}
\usepackage{amsmath,amsfonts,amsthm,amssymb,amscd}
\usepackage[dvipsnames]{xcolor}
\usepackage[hidelinks]{hyperref}
\usepackage{amsmath}
\usepackage[utf8]{inputenc}

\usepackage{breakurl}

\newcommand{\bburl}[1]{\textcolor{blue}{\url{#1}}}

\newtheorem{thm}{Theorem}[section]

\newtheorem{cor}[thm]{Corollary}
\newtheorem{claim}[thm]{Claim}
\newtheorem{lem}[thm]{Lemma}
\newtheorem{prop}[thm]{Proposition}

\newtheorem{defi}[thm]{Definition}
\newtheorem{rek}[thm]{Remark}

\usepackage[utf8]{inputenc}

\DeclareFixedFont{\ttb}{T1}{txtt}{bx}{n}{12} 
\DeclareFixedFont{\ttm}{T1}{txtt}{m}{n}{12}  

\usepackage{color}
\definecolor{deepblue}{rgb}{0,0,0.5}
\definecolor{deepred}{rgb}{0.6,0,0}
\definecolor{deepgreen}{rgb}{0,0.5,0}

\usepackage{listings}

\newcommand\pythonstyle{\lstset{
language=Python,
basicstyle=\ttm,
morekeywords={self},              
keywordstyle=\ttb\color{deepblue},
emph={MyClass,__init__},          
emphstyle=\ttb\color{deepred},    
stringstyle=\color{deepgreen},
frame=tb,                         
showstringspaces=false
}}

\lstnewenvironment{python}[1][]
{
\pythonstyle
\lstset{#1}
}
{}

\newcommand\pythoninline[1]{{\pythonstyle\lstinline!#1!}}

\newcommand{\phiAGone}{\ensuremath{\phi\text{-AG}_1}}
\newcommand{\phiSLCone}{\ensuremath{\phi\text{-SLC}_1}}
\newcommand{\weakphiSLCone}{\ensuremath{\text{weak}\ \phi\text{-SLC}_1}}
\newcommand{\weightedweakphiSLCone}{\ensuremath{\text{weighted\ weak}\ \phi\text{-SLC}_1}}
\newcommand{\Fphione}{\ensuremath{(F_\phi)_1}}

\DeclareMathOperator{\supp}{supp}

\DeclareMathOperator{\sgn}{sgn}
\numberwithin{equation}{section}

\DeclareFontFamily{U}{mathx}{}
\DeclareFontShape{U}{mathx}{m}{n}{<-> mathx10}{}
\DeclareSymbolFont{mathx}{U}{mathx}{m}{n}
\DeclareMathAccent{\widehat}{0}{mathx}{"70}
\DeclareMathAccent{\widecheck}{0}{mathx}{"71}

\begin{document}


\title{Isometric theory for greedy-type properties with enlarged greedy sums}

\author{Miguel Berasategui}
\address{UBA - Pab I, Facultad de Ciencias Exactas y
Naturales, Universidad de Buenos Aires, (1428), Buenos Aires, Argentina}
\email{mberasategui@dm.uba.ar}

\author{H\`ung Vi\d{\^e}t Chu}
\address{Department of Mathematics, Washington and Lee University, Lexington, VA 24450, USA}
\email{hchu@wlu.edu}

\subjclass[2020]{41A65, 46B15}

\keywords{bases, greedy algorithm, isometric theory, greedy sums}

\thanks{The second author gratefully acknowledges support from the Washington and Lee University's 2026 Summer Lenfest Grant.}

\begin{abstract}
For a semi-normalized Markushevich basis, we study isometric almost-greedy inequalities in which a greedy
sum of $\phi(m)$ terms is compared with coordinate projections onto at
most $m$ terms. Here $\phi$ is any strictly increasing map of the
nonnegative integers into themselves with $\phi(0)=0$. We sandwich $\phi$-almost greediness with constant $1$ ($\phi$-AG$_1$) between two levels of symmetry for largest coefficients (SLC):
$$\phi\mbox{-SLC}_1\ \Longrightarrow\ \phi\mbox{-AG}_1\ \Longrightarrow\ \mbox{weak }\phi\mbox{-SLC}_1.$$ Both implications are equivalences if and only if 
$\phi(1)\le2$. We then characterize $\phi$-AG$_1$ by the weighted weak $\phi$-SLC or a single homogeneous norm inequality, called Property $(F_\phi)$.
\end{abstract}

\maketitle

\tableofcontents

\section{Introduction}
Let $(X, \|\cdot\|)$ be a separable Banach space over
$\mathbb{F}\in\{\mathbb{R},\mathbb{C}\}$. Throughout the paper we work with a
\emph{semi-normalized Markushevich basis}
$\mathcal{B} =(e_n,e_n^*)_{n=1}^\infty$ for $X$: this is a biorthogonal
system with $e_n\in X$, $e_n^*\in X^*$, such that
\begin{equation}\label{MBasisAssumptions}
 e_n^*(e_j)\ =\ \delta_{nj},\qquad
 \overline{\operatorname{span}}\{e_n:n\in\mathbb N\}\ =\ X,\qquad
 \bigcap_{n=1}^\infty\ker(e_n^*)\ =\ \{0\},
\end{equation}
and
\begin{equation}\label{MBoundedAssumptions}
 0\ <\ \inf_n\|e_n\|\ \le\ \sup_n\|e_n\|\ <\ \infty,
 \qquad\sup_n\|e_n^*\|\ <\ \infty.
\end{equation}
In particular, the coordinate functionals are uniformly bounded; this is the
$M$-bounded setting in greedy approximation (see \cite{AABW}). 

For each $x\in X$, the coefficient sequence $(e_n^*(x))_n$ belongs to
$c_0$. Indeed, this is clear for finitely supported vectors and follows
for general $x$ from density of their span and
$\sup_n\|e_n^*\|<\infty$. Consequently greedy sets of every finite order
exist. All projections $P_A(x)=\sum_{n\in A}e_n^*(x)e_n$ appearing below
are taken over finite sets $A$.

To approximate $x$ by a linear combination of $m$ basis vectors, the Thresholding Greedy Algorithm sums up $m$ largest coefficients, namely 
$$G_m(x)\ :=\ \sum_{n\in A} e_n^*(x)e_n,\mbox{ where }|A| = m\mbox{ and }\min_{n\in A} |e_n^*(x)| \ \ge\ \max_{n\notin A} |e_n^*(x)|.$$
Such a set $A$ is called a \textit{greedy set of $x$ of order $m$} and is not necessarily unique. Let $\mathcal{G}(x,m)$ denote the collection of all greedy sets of $x$ of order $m$.  

For general biorthogonal systems, greedy approximants need not converge to
$x$; even for Schauder bases there are examples of this phenomenon
\cite[Example 10.2.2]{AK}. This motivates the following notion.

\begin{defi}\normalfont
A basis is said to be \textit{quasi-greedy} if $\lim_{m\rightarrow\infty} \|G_m(x)-x\| = 0$ for every $x\in X$ and for every sequence $(G_m(x))_{m=1}^\infty$. 
\end{defi}
According to \cite[Theorem 1]{W}, quasi-greedy bases can be characterized by the existence of a constant $C\ge 1$ such that
\begin{equation}\label{e10}\|G_m(x)\|\ \le\ C\|x\|, \mbox{ for all } x\in X, m\in \mathbb{N}, \mbox{ and } G_m(x)\in \mathcal{G}(x,m).\end{equation}
If \eqref{e10} holds, we say that our basis is \textit{quasi-greedy with constant $C$}. If instead, 
$$\|x-G_m(x)\|\ \le\ C\|x\|$$ 
holds, then we say our basis is \textit{suppression quasi-greedy with constant $C$}. Being suppression quasi-greedy is the same as being quasi-greedy with possibly different constants. 

Let us observe that the greedy map $G_m: X\rightarrow X$ is not linear; surprisingly, in the case of a quasi-greedy basis with constant $1$, Albiac and Ansorena \cite{AA0} were able to connect the quasi-greedy property to unconditionality, which is a linear property and is generally stronger than the quasi-greedy property. Here a basis is said to be \textit{suppression unconditional with constant $C$} if 
$$\|x-P_A(x)\|\ \le\ C\|x\|$$
for every $x\in X$ and every finite $A\subset\mathbb{N}$,
where $P_A(x):= \sum_{n\in A}e_n^*(x)e_n$. 

\begin{thm}\cite[Theorem 2.1]{AA0}\label{1QGchar}
A basis is quasi-greedy with constant $1$ if and only if it is suppression unconditional with constant $1$. 
\end{thm}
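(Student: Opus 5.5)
We prove the two implications separately. The easy direction uses the finite support of the vectors involved. The converse reduces to deleting a single coordinate, and that step is handled by a convexity argument.

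\emph{Suppression unconditional with constant $1$ implies quasi-greedy with constant $1$.} First we show $\|P_A(x)\|\le\|x\|$ for every finite $A$ and every $x\in X$. Take $x$ finitely supported, with support $S$. Then
\[
P_A(x)=x-P_{S\setminus A}(x),
\]
so the hypothesis applied with the finite set $S\setminus A$ gives $\|P_A(x)\|\le\|x\|$. Each $P_A$ is continuous, since it has finite rank and the functionals $e_n^*$ are bounded. Finitely supported vectors are dense in $X$ by \eqref{MBasisAssumptions}. Hence the inequality extends to all $x\in X$. Since every greedy sum has the form $G_m(x)=P_A(x)$ with $A\in\mathcal G(x,m)$, this gives \eqref{e10} with $C=1$.

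\emph{Quasi-greedy with constant $1$ implies suppression unconditional with constant $1$.} Fix a finite set $A$. Both sides of $\|x-P_A(x)\|\le\|x\|$ are continuous in $x$, so by density it suffices to treat $x$ with finite support $S$. Then $x-P_A(x)=P_{S\setminus A}(x)$. This vector is obtained from $x$ by deleting the coordinates in $A\cap S$ one at a time. By iteration, it is therefore enough to prove the one-coordinate inequality
\[
\|x-e_k^*(x)e_k\|\ \le\ \|x\|
\]
for every finitely supported $x$ and every $k$.

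\emph{The one-coordinate inequality.} This is the main step, because the greedy property alone does not let us delete an arbitrary coordinate. Put $x'=x-e_k^*(x)e_k$ and define
\[
f(s)=\|x'+se_k\|,\qquad s\in\mathbb F.
\]
The function $f$ is convex on $\mathbb F$, viewed as $\mathbb R$ or $\mathbb R^2$. If $x'=0$ the claim is trivial. Otherwise let
\[
\delta=\min\{|e_n^*(x')|: e_n^*(x')\neq 0\}>0,
\]
and let $B=\operatorname{supp}(x')$. For $|s|<\delta$, the set $B$ is a greedy set of $x'+se_k$ of order $|B|$, because $k\notin B$. The corresponding greedy sum is $x'$. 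Quasi-greediness with constant $1$ then gives $f(0)=\|x'\|\le f(s)$ for all $|s|<\delta$. So $0$ is a local minimum of the convex function $f$, and hence a global minimum. In particular
\[
\|x'\|=f(0)\le f(e_k^*(x))=\|x\|,
\]
which is the one-coordinate inequality and completes the proof.
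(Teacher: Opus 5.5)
Your proof is correct and complete. The paper gives no proof of this statement; it is quoted from \cite{AA0}, so there is no internal argument to compare with. Your route is a natural self-contained argument: reduce by density to finitely supported vectors, delete the coordinates of $A$ one at a time, and note that $s=0$ is a local, hence global, minimum of the convex function $s\mapsto\|x'+se_k\|$, because $x'$ is a greedy sum of $x'+se_k$ for $|s|<\delta$. Its one-coordinate convexity step is the same kind of device the paper uses, without details, in the proof of Lemma~\ref{implySQG1}.

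In the complex case you can avoid the local-versus-global minimum fact on $\mathbb{R}^2$. Take $\lambda\in(0,1)$ with $\lambda|e_k^*(x)|<\delta$ and use the identity $x'+\lambda e_k^*(x)e_k=(1-\lambda)x'+\lambda x$. This gives $\|x'\|\le(1-\lambda)\|x'\|+\lambda\|x\|$, and hence $\|x'\|\le\|x\|$ directly.
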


Studies of greedy-like conditions with constant $1$ have attracted considerable attention over the last two decades \cite{AA0, AA, AW, AABB, BBL}.
Oikhberg \cite{O} studied the quasi-greedy property with gaps, namely the following inequality
\begin{equation}\label{e11}\|G_{m}(x)\|\ \le\ C\|x\|, \mbox{ for all } x\in X, m\in \mathbf m, \mbox{ and } G_m(x)\in \mathcal{G}(x, m),\end{equation}
where $\mathbf m$ is a subsequence of the natural numbers. When $\mathbf m = \mathbb{N}$, we recover \eqref{e10}.
Oikhberg showed that if $\mathbf m$ increases fast enough, then \eqref{e11} may hold, while \eqref{e10} fails. However, as long as $C = 1$, \eqref{e11} is equivalent to suppression unconditionality with constant $1$ (\cite[Proposition 2.7]{O}). This slightly extends Theorem \ref{1QGchar}.

Let us rephrase Oikhberg's result in terms of enlarging greedy sums.
The admissible enlargement functions have a particularly simple description.

\begin{defi}\label{defPhi}\normalfont
Let $\Phi$ denote the family of all strictly increasing functions
$\phi:\mathbb Z_{\ge0}\to\mathbb Z_{\ge0}$ with $\phi(0)=0$.
\end{defi}

Since $\phi$ takes integer values, every $\phi\in\Phi$ satisfies
$\phi(m)\ge m$ for $m\ge0$. The following two alternative descriptions
are useful in our arguments.

\begin{prop}\label{propPhiEquivalent}
Let $\phi:\mathbb Z_{\ge0}\to\mathbb Z_{\ge0}$. The following
conditions are equivalent:
\begin{enumerate}
\item[(i)] $\phi\in\Phi$.
\item[(ii)] $\phi(0)=0$, $\phi(m)\ge m$ for every $m\ge1$, and for all
$m,n,k\in\mathbb Z_{\ge0}$ with $n\ge k$,
\begin{equation}\label{e20}
\phi(n)\ \le\ m\quad\Longrightarrow\quad\phi(n-k)\ \le\ m-k.
\end{equation}
\item[(iii)] There is a nondecreasing function
$\gamma:\mathbb Z_{\ge0}\to\mathbb Z_{\ge0}$ with $\gamma(0)=0$ and
$\phi(m)=m+\gamma(m)$ for every $m\ge0$.
\end{enumerate}
\end{prop}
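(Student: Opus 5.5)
We prove the cycle (i)$\Rightarrow$(iii)$\Rightarrow$(ii)$\Rightarrow$(i). Throughout, $\gamma(m):=\phi(m)-m$ is the natural candidate in (iii).

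For (i)$\Rightarrow$(iii), we first note that strict monotonicity with integer values gives $\phi(m+1)\ge\phi(m)+1$ for all $m\ge0$. Hence
$$\gamma(m+1)-\gamma(m)\;=\;\phi(m+1)-\phi(m)-1\;\ge\;0,$$
so $\gamma$ is nondecreasing. Moreover $\gamma(0)=\phi(0)=0$. Since $\gamma$ is nondecreasing with $\gamma(0)=0$, it follows that $\gamma\ge0$, so $\gamma$ maps into $\mathbb Z_{\ge0}$.

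For (iii)$\Rightarrow$(ii), we have $\phi(0)=0+\gamma(0)=0$, and $\phi(m)=m+\gamma(m)\ge m$ because $\gamma\ge0$. For \eqref{e20}, suppose $n\ge k$ and $\phi(n)\le m$. Using that $\gamma$ is nondecreasing,
$$\phi(n-k)\;=\;n-k+\gamma(n-k)\;\le\;n-k+\gamma(n)\;=\;\phi(n)-k\;\le\;m-k.$$

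For (ii)$\Rightarrow$(i), it suffices to prove $\phi(n-1)<\phi(n)$ for every $n\ge1$. Apply \eqref{e20} with $m=\phi(n)$ and $k=1$: the hypothesis $\phi(n)\le m$ holds trivially, so $\phi(n-1)\le\phi(n)-1<\phi(n)$. Together with $\phi(0)=0$, this shows $\phi\in\Phi$. Here the assumption $\phi(m)\ge m$ is not used; it is implied by the other conditions.

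No step is a genuine obstacle. The only point requiring care is the use of integrality in (i)$\Rightarrow$(iii): strict increase upgrades to $\phi(m+1)-\phi(m)\ge1$, which is exactly what makes $\gamma$ nondecreasing.
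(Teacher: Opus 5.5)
Your proof is correct and is essentially the paper's argument. The paper runs the cycle in the opposite direction, (i)$\Rightarrow$(ii)$\Rightarrow$(iii)$\Rightarrow$(i), but uses the same two ingredients: integrality upgrades strict monotonicity to $\phi(j+1)-\phi(j)\ge1$, and the implication in (ii) applied with $m=\phi(n)$ gives $\phi(n-k)\le\phi(n)-k$. Your side remark that $\phi(m)\ge m$ is redundant in (ii) is also right; the paper uses that hypothesis only to get $\gamma\ge0$ directly in (ii)$\Rightarrow$(iii).
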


The proof is deferred to Appendix~\ref{phi_equiv}. In particular,
$\phi(m)-m$ is nondecreasing, a fact used repeatedly below.

We restate \cite[Proposition 2.7]{O} as follows. 
\begin{prop} \cite[Proposition 2.7]{O}\label{Tprop} Let $\phi\in \Phi$.
A basis satisfies
$$\|G_{\phi(m)}(x)\|\ \le\ \|x\|, \mbox{ for all }x\in X, m\in \mathbb{N}, \mbox{ and }G_{\phi(m)}(x)\in \mathcal{G}(x,\phi(m))$$
if and only if it is suppression unconditional with constant $1$. 
\end{prop}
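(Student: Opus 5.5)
The plan is to prove the two directions separately, with the reverse direction carrying all the content. If the basis is suppression unconditional with constant $1$, then by Theorem~\ref{1QGchar} it is quasi-greedy with constant $1$. In particular $\|G_k(x)\|\le\|x\|$ holds for every order $k$, and so in particular for $k=\phi(m)$. Alternatively, suppression unconditionality with constant $1$ gives $\|P_A(x)\|\le\|x\|$ directly: apply it twice, once to $A^c$ restricted to a large finite set, and pass to the limit. In either case this direction is immediate.

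For the converse, assume $\|G_{\phi(m)}(x)\|\le\|x\|$ for all $m$. It suffices to show $\|x-P_{\{j\}}(x)\|\le\|x\|$ for every finitely supported $x$ and every single index $j$. Iterating this removes any finite set $A$ one point at a time, and density together with $\sup_n\|e_n^*\|<\infty$ extends the inequality to all $x$. To remove a single coordinate $j$ from $x=\sum_{n\in S}a_ne_n$ with $j\in S$ finite, I would do the following.
\begin{enumerate}
\item[(1)] Choose $m$ so that $N=\phi(m)$ is large. Pick a set $B$ of fresh indices disjoint from $S$ with $|B|=N-(|S|-1)$, which is possible as soon as $N\ge|S|-1$. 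Fix a huge constant $t>\max|a_n|$.
\item[(2)] Form $y=x+t\,1_{B}$, where $1_B=\sum_{n\in B}e_n$. The greedy sets of $y$ of order $N$ must contain $B$, since those coefficients are largest. The remaining $|S|-1$ slots go to coordinates of $x$.
\item[(3)] Arrange, by a perturbation, that the greedy set is exactly $B\cup(S\setminus\{j\})$. The perturbation replaces $a_j$ by $\lambda a_j$ with $\lambda\to0$; we treat $a_j=0$ trivially, and otherwise after the replacement $|a_j|$ is the unique smallest coefficient.
\end{enumerate}

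The obstacle is that this only yields $\|x'-a_j'e_j+t1_B\|\le\|x'+t1_B\|$ with the large vector $t1_B$ present. Removing $t1_B$ requires suppressing $B$, which is the very unconditionality we want. Instead I would use a different trick that avoids big coefficients: make $j$ the smallest coefficient and use greedy sums of order exactly $|S|-1$. This needs $|S|-1\in\phi(\mathbb N)$, which fails in general. To fix it, pad with tiny coefficients rather than large ones. Take fresh indices $B$ with $|B|=N-|S|+1$ and set
\[
y=x+\varepsilon\,1_B .
\]
The catch is that $\varepsilon$ must exceed $|a_j|$, so that the greedy set of order $N$ is $(S\setminus\{j\})\cup B$.

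So the cleaner route is this. Rescale the coordinate by setting $x_\lambda=x-(1-\lambda)a_je_j$, so that $|\lambda a_j|<\varepsilon<\min_{n\ne j}|a_n|$, the minimum being over nonzero coefficients. Applying the hypothesis gives
\[
\|x-a_je_j+\varepsilon1_B\|\le\|x-(1-\lambda)a_je_j+\varepsilon1_B\| .
\]
Now let $\lambda\to0$ and then $\varepsilon\to0$. Letting $\varepsilon\to0$ is legitimate because $B$ is fixed, finite, and independent of $\varepsilon$, so $\|\varepsilon 1_B\|\le\varepsilon|B|\sup\|e_n\|\to0$. This yields $\|x-a_je_j\|\le\|x-(1-\lambda)a_je_j\|$ with $\lambda\to0$, which only gives the trivial inequality $\|x-a_je_j\|\le\|x-a_je_j\|$. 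So instead I would keep $\lambda=1$ and choose $|a_j|<\varepsilon$ by scaling all of $x$ except $a_j$ up. This is again a convexity issue.

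The main obstacle is therefore extracting a contraction from the greedy sum when the removed coordinate is not the smallest. I would resolve it via convexity. Fix the other coefficients. The function $\lambda\mapsto\|x_{\lambda}\|$, with $x_\lambda$ having $j$-th coefficient $\lambda a_j$, is convex. The greedy inequality, applied with $\lambda$ small so that $j$ is removed by the greedy sum, shows that $\lambda=0$ is a minimizer over the whole range $|\lambda a_j|<\varepsilon$, with the $\varepsilon 1_B$ padding negligible in the limit. A convex function minimized at an interior point of an interval, with real or complex $\lambda$, is minimized there globally. Hence $\|x-a_je_j\|\le\|x\|$, which gives suppression unconditionality with constant $1$.
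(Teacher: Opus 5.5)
The paper gives no proof of this statement (it is quoted from \cite{O}), so there is no internal argument to compare with. Your final route is correct. It is the convexity argument behind Theorem~\ref{1QGchar}, with small padding used to bring the cardinality of the greedy set into $\phi(\mathbb N)$. However, your last paragraph takes the limits in an order that does not work, so you need to state the order explicitly.

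Fix $m$ with $N=\phi(m)\ge|S|-1$, a set $B$ of fresh indices with $|B|=N-|S|+1$, and $\varepsilon>0$. Put $h_\varepsilon(\lambda)=\|x-(1-\lambda)a_je_j+\varepsilon1_B\|$ for real $\lambda$, and let $\alpha=\min_{n\in S\setminus\{j\}}|a_n|$.
\begin{enumerate}
\item[(1)] If $|\lambda a_j|<\min\{\varepsilon,\alpha\}$, then $(S\setminus\{j\})\cup B$ is a greedy set of order $N$ for the vector inside the norm. The hypothesis therefore gives $h_\varepsilon(0)\le h_\varepsilon(\lambda)$.
\item[(2)] Since $h_\varepsilon$ is convex with a local minimum at $0$, you get $h_\varepsilon(0)\le h_\varepsilon(1)$, that is, $\|x-a_je_j+\varepsilon1_B\|\le\|x+\varepsilon1_B\|$.
\item[(3)] Only now let $\varepsilon\to0$, with $m$ and $B$ fixed.
\end{enumerate}

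As written, you apply convexity to the unpadded function $\lambda\mapsto\|x_\lambda\|$ after declaring the padding ``negligible in the limit''. But for fixed $\lambda\neq0$, the padded inequality is available only when $\varepsilon\ge|\lambda a_j|$. Letting $\varepsilon\to0$ first leaves no neighbourhood of $0$ on which $\lambda=0$ is known to be a minimizer, so the convexity step has nothing to act on.

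With the order corrected, you should also delete the abandoned attempts: steps (1)--(3) of your plan, the large padding $t1_B$, and the $\lambda\to0$ detour. The rest is fine: the reduction to a single coordinate, the iteration over $A$, the density extension using boundedness of $P_A$, and the easy direction via Theorem~\ref{1QGchar}.
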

Hence, enlarging greedy sums in the condition defining the quasi-greedy property with constant $1$ does not change the property. Our first main goal is to see the effect of enlarging greedy sums in the condition defining the \textit{almost greedy} property with constant $1$. We recall the definition of almost greedy bases.

\begin{defi}\normalfont
A basis is said to be \textit{almost greedy} with constant $C$ if 
$$\|x-G_m(x)\|\ \le\ C\inf_{|A|\le m}\|x-P_A(x)\|, \mbox{ for all } x\in X, m\in \mathbb{N}, \mbox{ and } G_m(x)\in \mathcal{G}(x,m).$$
\end{defi}
Albiac and Ansorena \cite{AA} characterized almost greedy bases with constant $1$ by the so-called \textit{Property (A)}. We need some notation to state Property (A). Let $\mathbb{N}^{<\infty}$ denote the collection of all finite subsets of $\mathbb{N}$, and for $x\in X$, let $\supp(x) = \{n: e_n^*(x)\neq 0\}$. For $A\in \mathbb{N}^{<\infty}$, we write $1_{A}$ to mean the vector $\sum_{n\in A}e_n$. A sign $\varepsilon = (\varepsilon_n)_{n=1}^\infty$ is a sequence of scalars of modulus $1$. Given a sign $\varepsilon$ and a set $A\in \mathbb{N}^{<\infty}$, we write $1_{\varepsilon A}$ to mean $\sum_{n\in A}\varepsilon_n e_n$. Finally, $\|x\|_\infty:= \max_{n\in \mathbb{N}}|e_n^*(x)|$.

\begin{defi}\normalfont
A basis is said to have \textit{Property (A)} if and only if 
$$\|x+1_{\varepsilon A}\|\ \le\ \|x + 1_{\delta B}\|,$$
for all $x\in X$ with $\|x\|_\infty\le 1$, for all $A, B\in \mathbb{N}^{<\infty}$ with $|A|\le |B|$ and pairwise disjoint $A, B, \supp(x)$, and for all signs $\varepsilon, \delta$. 
\end{defi}

\begin{thm}\cite[Theorem 2.3]{AA}\label{AGchar}
A basis is $1$-almost greedy if and only if it has Property (A). 
\end{thm}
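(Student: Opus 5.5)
The plan is to prove the two implications of Theorem~\ref{AGchar} separately. The direction ``$1$-almost greedy $\Rightarrow$ Property (A)'' is a direct substitution. The converse is done by rewriting both residuals over a common ``tail'' vector and passing from arbitrary coefficients to unimodular ones by convexity.

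\emph{Necessity.} Let $x$, $A$, $B$, $\varepsilon$, $\delta$ be as in Property (A), and put $y=x+1_{\varepsilon A}+1_{\delta B}$. Every coefficient of $y$ on $A\cup B$ has modulus $1\ge\|x\|_\infty$, so $B$ is a greedy set of $y$ of order $|B|$, ties being allowed. The almost greedy inequality with constant $1$, tested against the competitor $A$ (admissible because $|A|\le|B|$), gives
$$\|x+1_{\varepsilon A}\| = \|y-P_B(y)\| \le \|y-P_A(y)\| = \|x+1_{\delta B}\|.$$

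\emph{Sufficiency: reduction.} Fix $x$, $m$, $G\in\mathcal G(x,m)$ and $D$ with $|D|\le m$. Put $t=\min_{n\in G}|e_n^*(x)|$; if $t=0$ then $x=P_G(x)$ and there is nothing to prove, so we may normalize $t=1$. Let $z=P_{(G\cup D)^c}(x)$, $A=D\setminus G$ and $C=G\setminus D$. Then $x-P_G(x)=z+P_A(x)$ and $x-P_D(x)=z+P_C(x)$, and $|A|\le|C|$ because $|D|\le|G|$. Moreover $\|z\|_\infty\le1$, $|e_n^*(x)|\le1$ on $A$, and $|e_n^*(x)|\ge1$ on $C$. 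So it suffices to show
$$\|z+P_A(x)\| \le \|z+1_{\delta C}\| \le \|z+P_C(x)\|,$$
where $\delta_n=\sgn e_n^*(x)$ on $C$.

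\emph{First inequality.} Each $e_n^*(x)$ with $n\in A$ lies in the closed unit disc, hence is a convex combination of unimodular scalars. Expanding multilinearly, $z+P_A(x)$ is a convex combination of vectors $z+1_{\varepsilon A}$. For each of these, Property (A) with $B=C$ gives $\|z+1_{\varepsilon A}\|\le\|z+1_{\delta C}\|$, and the triangle inequality finishes.

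\emph{Second inequality: the main obstacle.} Here the coefficients must be enlarged from modulus $1$ to $|e_n^*(x)|\ge1$. Since $\delta_n$ lies on the segment joining $-\delta_n$ and $e_n^*(x)=|e_n^*(x)|\delta_n$, the vector $z+1_{\delta C}$ is a convex combination of the vectors
$$z+P_{C'}(x)+1_{-\delta(C\setminus C')}, \qquad C'\subseteq C.$$
So one must show that each of these has norm at most $\|z+P_C(x)\|$, i.e.\ that flipping the signs of unimodular coefficients on $C\setminus C'$ and then enlarging them does not hurt.

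I would argue by induction on $|C\setminus C'|$, changing one coordinate $n$ at a time. The function $s\mapsto\|w+s\delta_ne_n\|$ is convex, so it is enough to compare its values at $s=\pm1$. That comparison should follow from Property (A) applied with singletons $A=\{n\}$, $B=\{n\}$ after the two competing copies are separated by a relabeling argument. The difficulty is that the surrounding vector $w$ may already contain enlarged coefficients exceeding $1$, which Property (A) does not directly allow.

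To handle this I would order the coordinates of $C$ by decreasing $|e_n^*(x)|$ and enlarge them in reverse order, together with a rescaling of the whole vector. The aim is that at each step the part of $w$ off the current coordinate can be treated, after rescaling, as having sup-norm at most $1$ relative to the moving coefficient. This is the step I expect to require the most care.
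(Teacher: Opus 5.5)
Your necessity argument, the reduction, and the first inequality are correct. They coincide with the paper's route: the statement is the case $\phi(m)=m$ of Theorem~\ref{M1}, whose proof uses Proposition~\ref{ezcharSLC}, Lemma~\ref{implySQG1} and Lemma~\ref{BBGlem}.

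\textbf{The gap.} It is the second inequality $\|z+1_{\delta C}\|\le\|z+P_C(x)\|$, which is exactly the truncation estimate $\|T_1(w)\|\le\|w\|$ for $w=z+P_C(x)$. You do not prove it, and the route you sketch cannot be completed. It needs $\|z+P_{C'}(x)+1_{-\delta(C\setminus C')}\|\le\|w\|$ for every $C'\subseteq C$. When $C'$ carries larger coefficients than $C\setminus C'$, this is a sign symmetry for coefficients that are not the largest ones. Property (A) does not provide that, and no ordering or rescaling changes the relative sizes.

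\textbf{A counterexample to the claim you would need.} On real $\ell_1$ let $\|x\|=\max\{\|x\|_1,\sup(1.2|x_i|+0.5|x_j+x_k|)\}$, the supremum taken over distinct $i,j,k$. A term indexed by $i$ can exceed $\|x\|_1$ only if $|x_i|>|x_l|$ for all $l\ne i$. Consequently, for $\|y\|_\infty\le1$ and $E$ disjoint from $\supp(y)$, you get $\|y+1_{\eta E}\|=\|y\|_1+|E|$ if $|E|\ge2$, and $\|y+1_{\eta E}\|=\max\{\|y\|_1+1,\,1.2+0.5\sup_{j\ne k}|y_j+y_k|\}$ if $|E|=1$. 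These values do not depend on $E$ or $\eta$ and are nondecreasing in $|E|$ starting from $\|y\|$, so Property (A) holds. Now take $x=10e_1-(1+\theta)e_2+e_3+e_4$ with $0<\theta<1$, $G=\{1,2,4\}$ and $D=\{4\}$. Your reduction gives $z=e_3$ and $C=\{1,2\}$. For $C'=\{1\}$ your vector is $10e_1+e_2+e_3$, of norm $13$, whereas $\|w\|=\|10e_1-(1+\theta)e_2+e_3\|=12.5+\theta/2$. Its weight $\frac{2}{11}\cdot\frac{\theta}{2+\theta}$ in your convex combination is positive, so the inequality your induction is meant to establish is false, even though $\|z+1_{\delta C}\|=3\le\|w\|$.

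\textbf{The missing idea.} Interpolate $\delta_n$ between $0$ and $e_n^*(x)$ (not $-\delta_n$) along nested greedy sets, so that only suppressions of greedy sets occur.
\begin{itemize}
\item Property (A) with $A=\emptyset$ and $|B|=1$, rescaled, gives $\|y\|\le\|y+se_j\|$ whenever $j\notin\supp(y)$ and $|s|\ge\|y\|_\infty$.
\item Restoring coefficients in increasing order of modulus then gives $\|w-P_{G'}(w)\|\le\|w\|$ for every greedy set $G'$ of $w$; this is Lemma~\ref{implySQG1}.
\item Write $C=\{n_1,\dots,n_k\}$ with $s_i=|e_{n_i}^*(x)|$ nonincreasing, and put $G_j=\{n_1,\dots,n_j\}$. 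These are greedy sets of $w$ because $s_k\ge1\ge\|z\|_\infty$.
\end{itemize}
Then
\[
z+1_{\delta C}=\sum_{j=0}^{k}\lambda_j\bigl(w-P_{G_j}(w)\bigr),\qquad \lambda_0=\frac{1}{s_1},\quad \lambda_j=\frac{1}{s_{j+1}}-\frac{1}{s_j}\ (0<j<k),\quad \lambda_k=1-\frac{1}{s_k},
\]
is a convex combination, so $\|z+1_{\delta C}\|\le\|w\|$. This is the content of Lemma~\ref{BBGlem}, and with it your argument closes.
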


Inspired by Theorem \ref{1QGchar} and Proposition \ref{Tprop} that show the equivalence between $\|G_m(x)\|\le \|x\|$ and $\|G_{\phi(m)}(x)\|\le \|x\|$, we investigate the following condition for almost greedy-like properties

\begin{equation}\label{e1}
\|x-G_{\phi(m)}(x)\|\ \le\ \inf_{|A|\le m}\|x-P_A(x)\|,
\end{equation}
for all $x\in X$, $m\in\mathbb N$, and
$G_{\phi(m)}(x)\in\mathcal G(x,\phi(m))$. We then ask how
enlargement affects the characterization in Theorem~\ref{AGchar}. 

\begin{defi}\normalfont
A basis satisfying \eqref{e1} is called \emph{$\phi$-almost greedy with constant $1$}, abbreviated \phiAGone.
\end{defi}

A particular case of $\phi$-AG property appeared in \cite{C}, which considered the function $g(m):=\lceil\lambda m\rceil$, with a fixed $\lambda \ge 1$. The map $g$ belongs to $\Phi$, since it is strictly increasing and
$g(0)=0$.
\begin{prop}\cite[Corollary 3.2]{C}\label{p22}
Let $(e_n)_{n=1}^\infty$ be suppression quasi-greedy with constant $1$. For a fixed $\lambda \ge 1$, define $\phi(m) = \lceil\lambda m\rceil$, $m\in \mathbb{Z}_{\ge 0}$. Then $\mathcal{B}$ is \phiAGone{} if and only if it is $\phi$-symmetric for largest coefficients with constant $1$. 
\end{prop}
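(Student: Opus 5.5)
Throughout I take $\phi$-symmetry for largest coefficients with constant $1$ to mean
$$\|x+1_{\varepsilon A}\|\ \le\ \|x+1_{\delta B}\|$$
for all $x$ with $\|x\|_\infty\le 1$, all signs $\varepsilon,\delta$, and all finite $A,B$ with $\phi(|A|)\le |B|$ and $A,B,\supp(x)$ pairwise disjoint. This is the natural $\phi$-analogue of Property (A). The hypothesis that the basis is suppression quasi-greedy with constant $1$ is used through Theorem~\ref{1QGchar}: the basis is suppression unconditional with constant $1$. In particular $\|P_{E^c}y\|\le\|y\|$ for every finite $E$. The proof will use only this, together with the monotonicity property \eqref{e20} from Proposition~\ref{propPhiEquivalent}. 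Nothing special about $\lceil\lambda m\rceil$ is needed beyond $\phi\in\Phi$.

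\emph{\phiAGone{} implies $\phi$-SLC$_1$.} Fix $x,A,B,\varepsilon,\delta$ as above and put $m=|A|$ and $y=x+1_{\varepsilon A}+1_{\delta B}$. Every coefficient of $y$ on $B$ has modulus $1\ge\|x\|_\infty$. Since $|B|\ge\phi(m)$, any $D\subset B$ with $|D|=\phi(m)$ is a greedy set of $y$ of order $\phi(m)$. Applying \eqref{e1} with the competitor set $A$ (note $|A|\le m$) gives
$$\|x+1_{\varepsilon A}+1_{\delta(B\setminus D)}\|\ \le\ \|x+1_{\delta B}\|.$$
Deleting the block $B\setminus D$ by suppression unconditionality with constant $1$ yields $\|x+1_{\varepsilon A}\|\le\|x+1_{\delta B}\|$. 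The case $m=0$ reduces directly to suppression unconditionality.

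\emph{$\phi$-SLC$_1$ implies \phiAGone.} Let $G\in\mathcal G(x,\phi(m))$ and let $|A|\le m$. Put $k=|A\cap G|$, $A'=A\setminus G$, $G'=G\setminus A$, and $z=x-P_{A\cup G}(x)$. Then
$$x-P_G(x)=z+P_{A'}(x),\qquad x-P_A(x)=z+P_{G'}(x).$$
Since $\phi(|A|)\le\phi(m)$, property \eqref{e20} gives $\phi(|A'|)=\phi(|A|-k)\le\phi(m)-k=|G'|$. Let $t=\min_{n\in G}|e_n^*(x)|$. The case $t=0$ is trivial, since then $x-P_G(x)=0$. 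Otherwise, all coefficients of $z$ and of $P_{A'}(x)$ have modulus at most $t$, while those of $P_{G'}(x)$ have modulus at least $t$.

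The proof of this direction then uses two convexity steps, both valid under suppression unconditionality with constant $1$.
\begin{enumerate}
\item[(a)] Let $\eta=(\sgn e_n^*(x))$. Then $\|z+t1_{\eta G'}\|\le\|z+P_{G'}(x)\|$, because shrinking the moduli of coefficients on a block does not increase the norm.
\item[(b)] $\|z+P_{A'}(x)\|\le\max_{\theta}\|z+t1_{\theta A'}\|$, because $P_{A'}(x)/t$ lies in the (absolutely) convex hull of the vectors $1_{\theta A'}$.
\end{enumerate}
Applying $\phi$-SLC$_1$ to $z/t$, with sets $A'$ and $G'$, gives $\|z+t1_{\theta A'}\|\le\|z+t1_{\eta G'}\|$ for every sign $\theta$. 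Chaining this with (a) and (b) yields \eqref{e1}.

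The main obstacle is justifying the convexity lemma behind (a) and (b) with constant exactly $1$ in the complex case. I would first reduce to real coefficient-wise multipliers $\lambda_n\in[0,1]$. With constant $1$, each such multiplier operator is a convex combination of the projections $P_{E^c}$, obtained by layering the level sets of $\lambda$, so it has norm at most $1$. Step (b) then follows because a point of the polydisc is a convex combination of its unimodular vertices. Both directions of the proposition hinge on this lemma.
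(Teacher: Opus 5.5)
\textbf{Gap: the hypothesis does not give suppression unconditionality.} Your two-step structure is the paper's own argument: the proof of Theorem~\ref{M1}, (b)$\Rightarrow$(a), which works for every $\phi\in\Phi$ once $1$-suppression quasi-greediness is available. The problem is how you use the hypothesis. Theorem~\ref{1QGchar} concerns quasi-greediness with constant $1$ in the sense of \eqref{e10}, namely $\|G_m(x)\|\le\|x\|$. The proposition only assumes $\|x-G_m(x)\|\le\|x\|$, and that does not imply suppression unconditionality with constant $1$.

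Here is a counterexample. On $\mathbb{R}^2$ take $\|(x_1,x_2)\|=\max\{|x_1+x_2|,\tfrac12(|x_1|+|x_2|)\}$. Removing a largest coefficient leaves a vector of norm $\min\{|x_1|,|x_2|\}\le\tfrac12(|x_1|+|x_2|)\le\|x\|$, so the basis is $1$-suppression quasi-greedy. Yet $\|e_1-\tfrac12 e_2\|=\tfrac34<1=\|P_{\{1\}}(e_1-\tfrac12e_2)\|$. An $\oplus_1\ell_1$-sum with $\ell_1$ gives an infinite-dimensional version. Consequently your multiplier lemma, stated for arbitrary $\lambda_n\in[0,1]$, is false under the actual hypothesis; $P_{\{1\}}$ is such a multiplier. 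Your justification of step (a), and your appeal to unconditionality in the first direction, therefore do not stand as written.

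\textbf{How to repair it.} You only ever need to delete or shrink the \emph{largest} coefficients, and $1$-suppression quasi-greediness covers exactly that.

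In the first direction, every coefficient of $x+1_{\varepsilon A}+1_{\delta(B\setminus D)}$ has modulus at most $1$, and those on $B\setminus D$ have modulus exactly $1$. So $B\setminus D$ is a greedy set and can be deleted directly. The same applies to $B$ when $A=\emptyset$.

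In the second direction, step (b) is pure convexity and needs no unconditionality at all. Step (a) is exactly $\|T_t(w)\|\le\|w\|$ for $w=x-P_A(x)$, which is Lemma~\ref{BBGlem} with $C=1$. Your layering argument does prove this if you use the specific multiplier: $\lambda_n=t/|e_n^*(x)|$ on $G'$ and $\lambda_n=1$ elsewhere. Its level sets are $E_s=\{n:\lambda_n<s\}=\{n:|e_n^*(w)|>t/s\}$, and these are greedy sets of $w$. Hence $M_\lambda w=\int_0^1\bigl(w-P_{E_s}(w)\bigr)\,ds$ is a convex combination of vectors of norm at most $\|w\|$.

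With these changes your proof coincides with the paper's. Note also that the hypothesis is not needed in the second direction, since \phiSLCone{} already implies $1$-suppression quasi-greediness (Lemma~\ref{implySQG1}).
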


\textit{Symmetry for the largest coefficients} is a natural generalization of Property (A). 

\begin{defi}\label{defiSLC}\normalfont
A basis is \textit{$\phi$-symmetric for largest coefficients ($\phi$-SLC) with constant $C$} if
$$\|x+1_{\varepsilon A}\|\ \le\ C\|x+1_{\delta B}\|,$$
for all $x\in X$ with $\|x\|_\infty\le 1$, for all $A, B\in \mathbb{N}^{<\infty}$ with $\phi(|A|)\le |B|$ and pairwise disjoint $A, B, \supp(x)$, and for all signs $\varepsilon, \delta$. 
\end{defi}
We write \phiSLCone{} when $C=1$. Our first result addresses the two following questions: 
\begin{itemize}
\item what happens if we drop the suppression quasi-greedy assumption in Proposition~\ref{p22}? 
\item for which functions $\phi\in\Phi$ are \phiAGone{} and \phiSLCone{} equivalent for all bases?
\end{itemize}

\begin{thm}\label{M1}
Let $\phi\in\Phi$. The following are equivalent:
\begin{enumerate}
\item[(a)] For every semi-normalized Markushevich basis,
\phiAGone{} and \phiSLCone{} are equivalent.
\item[(b)] $\phi(1)\in\{1,2\}$.
\end{enumerate}
Moreover, \phiSLCone{} always implies \phiAGone, and if
$\phi(1)\ge3$, this implication is strict, even among Schauder bases
equivalent to the canonical basis of $\ell_1$.
\end{thm}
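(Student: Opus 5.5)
The argument splits into three parts: (1) \phiSLCone{} always implies \phiAGone; (2) if $\phi(1)\le 2$, then \phiAGone{} implies \phiSLCone; (3) if $\phi(1)\ge 3$, there is a basis equivalent to the canonical basis of $\ell_1$ that is \phiAGone{} but not \phiSLCone. Together these give (a)$\Leftrightarrow$(b) and the "moreover" clause.

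\textbf{Step 1: \phiSLCone{} implies \phiAGone.} Fix $x$, $m$, a greedy set $G\in\mathcal G(x,\phi(m))$ and a set $A$ with $|A|\le m$. We must show $\|x-P_G(x)\|\le\|x-P_A(x)\|$. Write $t=\min_{n\in G}|e_n^*(x)|$, so that $\|x-P_G(x)\|_\infty\le t$.

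Since $|G\setminus A|\ge \phi(m)-|A\cap G|$, property \eqref{e20} gives $\phi(|A\setminus G|)\le |G\setminus A|$. This is exactly the cardinality hypothesis needed for an SLC comparison between the sets $A\setminus G$ and $G\setminus A$.

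Set $y=x-P_{A\cup G}(x)$. We then argue in two stages.
\begin{itemize}
\item First replace the coefficients of $x$ on $A\setminus G$ by $t\varepsilon$. This is a convexity step: the coefficients there have modulus at most $t$, and the vector is a convex combination of sign choices. Then apply \phiSLCone{} to $y/t$ with $A\setminus G$ and $G\setminus A$.
\item Then pass from $y+t1_{\delta(G\setminus A)}$ back to $y+P_{G\setminus A}(x)$. Here the coefficients have modulus at least $t$, so we use the standard "uniform large coefficients" trick: a 1-SLC condition with $\phi(|A|)\le|B|$ lets us raise coefficients on $B$.
\end{itemize}

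The second bullet is the delicate point. Without quasi-greediness, one can only use SLC with $A=\emptyset$, which gives $\|y\|\le\|y+1_{\delta B}\|$. Combining this with convexity in each coordinate of $B$ separately shows that the norm is nondecreasing when coefficients on $B$ grow beyond $t$ in modulus. I expect this to go through, in the manner of the Albiac--Ansorena argument for Property (A).

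\textbf{Step 2: the case $\phi(1)\le 2$.}
\begin{itemize}
\item If $\phi(1)=1$, then $\phi(m)=m$ and the claim is Theorem~\ref{AGchar}.
\item If $\phi(1)=2$, we test \phiAGone{} on well-chosen vectors. Take $x$, $A$, $B$ and signs as in Definition~\ref{defiSLC} with $\phi(|A|)\le|B|$. Consider $z=x+1_{\varepsilon A}+(1+\eta)1_{\delta B}$ with a small $\eta>0$, and set $m=|A|$. Pick a greedy set of order $\phi(m)$ consisting of $\phi(m)$ elements of $B$; let the remaining part of $B$, of size $|B|-\phi(m)$, be left over. Projecting away $A$ has cost $m$.
\end{itemize}
This produces the inequality only when $|B|=\phi(m)$ exactly, up to leftover terms. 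Handling general $|B|$ and the leftover terms requires that the inequality for smaller sets propagate. Here $\phi(1)\le2$ should be used: it gives $\phi(k+1)\le\phi(k)+\ldots$, and more precisely one can grow $A$ one element at a time, borrowing elements from $x$'s support via an induction on $|A|$. I expect this induction, which shows that weak SLC upgrades to full SLC when $\phi(1)\le2$, to be the main obstacle. I would try first to peel off one element of $A$ at a time, using the $\phi(1)\le 2$ comparison of one vector against two.

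\textbf{Step 3: the counterexample when $\phi(1)\ge3$.} We renorm $\ell_1$ with
\[
\|x\|=\max\Big(\sum_n|x_n|,\ \theta\sup_n |x_n|\Big)
\]
or a similar variant that weights a single coordinate, with the constant tuned so that \phiSLCone{} fails for $|A|=1$ against some $B$ with $|B|=\phi(1)$ and a non-zero $x$. We must also check that \phiAGone{} still holds, because greedy sums of size at least $3$ always remove enough $\ell_1$-mass. The norm is equivalent to the $\ell_1$ norm, and the canonical vectors form a Schauder basis.
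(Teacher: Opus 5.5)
Your proposal has genuine gaps in Steps 2 and 3; Step 1 is sound in outline but needs one fix.

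\textbf{Step 2.} Two things go wrong here.
\begin{itemize}
\item $\phi(1)=1$ does not force $\phi(m)=m$. The map with $\phi(0)=0$ and $\phi(m)=2m-1$ for $m\ge1$ lies in $\Phi$, so Theorem~\ref{AGchar} does not settle that case.
\item You are missing the idea that disposes of the leftover part of $B$, and no induction on $|A|$ is needed. Apply \phiAGone{} once with $m=1$. Take finitely supported $x$ with $\|x\|_\infty\le1$, $j\notin\supp(x)$, and $D$ disjoint from $\supp(x)\cup\{j\}$ with $|D|=\phi(1)-1\le1$. Then $\{j\}\cup D$ is greedy of order $\phi(1)$ for $x+e_j+1_D$, and $D$ is an admissible competitor, so $\|x\|\le\|x+e_j\|$.
\end{itemize}
This is exactly where $\phi(1)\le2$ is used: for $\phi(1)\ge3$ the set $D$ is too large to compete. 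After a density argument it gives suppression quasi-greediness with constant $1$ (Lemma~\ref{l1}). Now take $y=x+1_{\varepsilon A}+1_{\delta B}$ and any $B'\subset B$ with $|B'|=\phi(|A|)$. Then
\[
\|x+1_{\varepsilon A}\|\ \le\ \|y-P_{B'}(y)\|\ \le\ \|y-P_A(y)\|\ =\ \|x+1_{\delta B}\|.
\]
The first inequality is suppression quasi-greediness applied to $y-P_{B'}(y)$ and its greedy set $B\setminus B'$. The second is \phiAGone{} with $m=|A|$; ties are allowed in greedy sets, so no $\eta$ is needed. The case $A=\emptyset$, which \phiAGone{} does not reach since $m\ge1$, follows from suppression quasi-greediness alone.

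\textbf{Step 3.} This cannot work, and the display explains why. It shows that \phiAGone{} together with suppression quasi-greediness with constant $1$ implies \phiSLCone{} for \emph{every} $\phi\in\Phi$. So a separating basis must fail suppression quasi-greediness with constant $1$. Both of your norms are lattice norms, so the canonical basis is $1$-unconditional and cannot separate the two properties. Concretely:
\begin{itemize}
\item $\max(\sum_n|x_n|,\theta\sup_n|x_n|)$ is $1$-symmetric and $1$-unconditional. Hence it has Property~(A) and is \phiSLCone{} for all $\phi$.
\item For $\max(\sum_n|x_n|,\theta|x_1|)$ with $\theta>\phi(1)$, take $v=e_1+1_B$ with $|B|=\phi(1)$ and $1\notin B$. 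Then $B\in\mathcal G(v,\phi(1))$, but $\|v-P_B(v)\|=\theta>\phi(1)=\|v-P_{\{1\}}(v)\|$, so \phiAGone{} fails as well.
\end{itemize}
You need a non-lattice norm, such as the paper's $|x_1+\frac{x_2}{3}|\vee|\frac{x_1}{3}+x_2|\vee\frac13\sum_i|x_i|$. Coupling the first two coordinates destroys suppression quasi-greediness with constant $1$ (test $(1,-\frac56,0,\dots)$). Meanwhile the $\frac13\ell_1$ term together with $\phi(m)-m\ge\phi(1)-1\ge2$ yields \phiAGone{}, by a short case analysis on how the greedy set and the competitor meet $\{1,2\}$.

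\textbf{Step 1.} This is the paper's argument: Proposition~\ref{ezcharSLC}, Lemma~\ref{implySQG1}, and the truncation bound of Lemma~\ref{BBGlem}. However, your justification of the second bullet is false as stated. The $A=\emptyset$ inequality plus convexity does not make the norm nondecreasing in each coordinate of $B$ separately once all coefficients exceed $t$. On $\mathbb R^2$, the norm $\max(|a-\frac b5|,|b-\frac a5|,\frac{|a|+|b|}{2})$ satisfies both ingredients, yet $\|(1,\frac1{10})\|=0.98>0.9=\|(1,\frac12)\|$. The one-coordinate convexity argument needs the raised coordinate to end at least as large as the sup norm of everything else. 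So raise the coordinates of $G\setminus A$ one at a time in nondecreasing order of $|e_n^*(x)|$, or simply quote Lemma~\ref{BBGlem}.
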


\begin{cor}\label{c24}
    For a fixed $\lambda \ge 1$, define $\phi(m) = \lceil\lambda m\rceil$, $m\in \mathbb{Z}_{\ge 0}$. The following are equivalent:
    \begin{enumerate}
    \item For every semi-normalized Markushevich basis, \phiAGone{} and \phiSLCone{} are equivalent. 
    \item The factor $\lambda$ is at most $2$.
    \end{enumerate}
\end{cor}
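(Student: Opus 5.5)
Corollary~\ref{c24} is a direct specialization of Theorem~\ref{M1} to the function $g(m)=\lceil\lambda m\rceil$. The plan is to check that $g\in\Phi$, and then to translate condition (b) of Theorem~\ref{M1}, namely $g(1)\in\{1,2\}$, into a condition on $\lambda$.

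First, $g\in\Phi$. We have $g(0)=\lceil 0\rceil=0$. Since $\lambda\ge1$, for $m\ge0$ we get $\lambda(m+1)\ge\lambda m+1$, hence
\[
\lceil\lambda(m+1)\rceil\ \ge\ \lceil\lambda m+1\rceil\ =\ \lceil\lambda m\rceil+1 .
\]
So $g$ is strictly increasing and integer valued on $\mathbb Z_{\ge0}$, as already noted in the paper before Proposition~\ref{p22}. Theorem~\ref{M1} therefore applies with $\phi=g$. It says that statement (1) of the corollary holds if and only if $g(1)=\lceil\lambda\rceil\in\{1,2\}$.

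It remains to show that $\lceil\lambda\rceil\in\{1,2\}$ is equivalent to $1\le\lambda\le 2$ under the standing hypothesis $\lambda\ge1$. If $1\le\lambda\le2$, then $\lceil\lambda\rceil\in\{1,2\}$, with value $1$ exactly when $\lambda=1$. If $\lambda>2$, then $\lceil\lambda\rceil\ge3$; conversely, $\lceil\lambda\rceil\le2$ forces $\lambda\le2$. Hence (1) holds if and only if $\lambda\le2$, which is statement (2).

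There is no real obstacle here. The only point needing care is the boundary case $\lambda=2$: there $g(1)=2$ is admissible, so the equivalence is with $\lambda\le2$ and not $\lambda<2$. For $2<\lambda\le 3$ one still has $g(1)=3$, and Theorem~\ref{M1} provides the counterexample, even among Schauder bases equivalent to the canonical basis of $\ell_1$.
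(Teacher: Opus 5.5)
Your proposal is correct and is essentially the paper's argument. The paper gives no separate proof: the corollary is Theorem~\ref{M1} applied to $\phi(m)=\lceil\lambda m\rceil$, which lies in $\Phi$ as noted before Proposition~\ref{p22}, combined with the fact that for $\lambda\ge1$ one has $\lceil\lambda\rceil\in\{1,2\}$ if and only if $\lambda\le2$.
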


Theorem 1.12 sharpens and extends the earlier characterization in \cite[Corollary 3.2]{C}. For fixed-factor enlargements $\phi(m)=\lceil\lambda m\rceil$, that result established the equivalence between $\phi$-AG$_1$ and $\phi$-SLC$_1$ under the additional assumption of suppression quasi-greediness with constant $1$. Our theorem determines exactly when this assumption can be removed: the equivalence holds for all bases when $1\le\lambda\le2$, but fails in general when $\lambda>2$, even for Schauder bases equivalent to the canonical basis of $\ell_1$. More generally, we establish this sharp dichotomy for every $\phi\in\Phi$, showing that the equivalence depends only on $\phi(1)$, rather than on the subsequent growth of $\phi$.

A related threshold at $\lambda=2$ appears in [11, Theorem 2.8], but in a different setting. That theorem concerns an interval-based version of almost greediness, in which the greedy error is compared with coordinate projections onto intervals rather than arbitrary finite sets. Thus, although the two results share the same critical threshold, they characterize distinct approximation inequalities. Our theorem treats the original coordinate-projection comparison and extends the classification beyond fixed-factor enlargements to arbitrary functions $\phi\in\Phi$.

The converse in Theorem~\ref{M1} fails once $\phi(1)\ge3$.
To isolate an isometric condition that is implied by \phiAGone{}, we
introduce \weakphiSLCone{}, which is motivated by the observation that a greedy set of cardinality $\phi(m)$ and a competing set of cardinality at most $m$ may have $k$ indices in common. Removing these common indices leaves sets of cardinalities $\phi(m)-k$ and at most $m-k$, respectively, giving precisely the cardinality conditions in the following definition.

\begin{defi}\label{defiweakSLC}\normalfont
A basis is \textit{weak $\phi$-SLC with constant $C$} if
$$\|x+1_{\varepsilon A}\|\ \le\ C\|x+1_{\delta B}\|,$$
for all $x\in X$ with $\|x\|_\infty\le 1$, for all signs $\varepsilon, \delta$, and for all $A, B\in \mathbb{N}^{<\infty}$ such that
\begin{enumerate}
\item $A, B, \supp(x)$ are pairwise disjoint, and
\item there exist $m\ge 1$ and $k\ge 0$ so that $|A| \le m - k$ and $|B| = \phi(m)-k$. 
\end{enumerate}
\end{defi}
We write \weakphiSLCone{} when $C=1$.

\begin{thm}\label{M2}
Let $\phi\in\Phi$. The following are equivalent:
\begin{enumerate}
\item[(a)] For every semi-normalized Markushevich basis,
\phiAGone{} and \weakphiSLCone{} are equivalent.
\item[(b)] $\phi(1)\in\{1,2\}$.
\end{enumerate}
Moreover,  \phiAGone{} always implies \weakphiSLCone{}, and if
$\phi(1)\ge3$, this implication is strict, even among Schauder bases
equivalent to the canonical basis of $\ell_1$.
\end{thm}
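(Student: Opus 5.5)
The plan has three parts, matching the three assertions of Theorem~\ref{M2}: the implication \phiAGone{} $\Rightarrow$ \weakphiSLCone{} for every $\phi\in\Phi$; the converse when $\phi(1)\in\{1,2\}$; and a renorming of $\ell_1$ when $\phi(1)\ge3$.

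\emph{Step 1: \phiAGone{} $\Rightarrow$ \weakphiSLCone{}.} Fix $x$ with $\|x\|_\infty\le1$, signs $\varepsilon,\delta$, and $m\ge1$, $k\ge0$, $A,B$ as in Definition~\ref{defiweakSLC}. Choose a set $D$ of $k$ indices outside $A\cup B$ on which $|e_n^*(x)|$ is as small as we like. This is possible since $(e_n^*(x))\in c_0$; if $\supp(x)$ is cofinite, we pay an error $\|P_D x\|\le k\sup_n\|e_n\|\max_{n\in D}|e_n^*(x)|$. For $s>0$ put
$$y\ =\ x-P_Dx+1_{\varepsilon A}+(1+s)\bigl(1_{\delta B}+1_D\bigr).$$
Then $B\cup D$ is the unique greedy set of $y$ of order $|B|+k=\phi(m)$, and $A\cup D$ has cardinality at most $m$. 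Applying \eqref{e1} gives
$$\|x-P_Dx+1_{\varepsilon A}\|\ \le\ \|x-P_Dx+(1+s)1_{\delta B}\|.$$
Letting $s\to0$ and then letting the error $\|P_Dx\|\to0$ yields the weak $\phi$-SLC$_1$ inequality.

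\emph{Step 2: the converse for $\phi(1)\le2$.} Let $G\in\mathcal G(x,\phi(m))$ and $|P|\le m$. Set $k=|G\cap P|$, $A=P\setminus G$, $B=G\setminus P$, and $t=\min_{n\in G}|e_n^*(x)|$, so that $|A|\le m-k$ and $|B|=\phi(m)-k$. Write $z=x-P_{G\cup A}x$, which satisfies $\|z\|_\infty\le t$. Then
$$x-P_Gx\ =\ z+P_Ax,\qquad x-P_Px\ =\ z+P_Bx.$$
We follow the Albiac--Ansorena scheme of Theorem~\ref{AGchar}.
\begin{itemize}
\item Since $|e_n^*(x)|\le t$ on $A$, convexity in each coordinate of $A$ reduces $\|z+P_Ax\|$ to a maximum of $\|z+t1_{\varepsilon A}\|$ over signs.
\item Weak $\phi$-SLC$_1$ applied to $z/t$ bounds $\|z+t1_{\varepsilon A}\|$ by $\|z+t1_{\delta B}\|$, with $\delta$ the signs of $x$ on $B$.
\item It remains to show $\|z+t1_{\delta B}\|\le\|z+P_Bx\|$, i.e.\ monotonicity when coefficients are raised in modulus above $\|z\|_\infty$.
\end{itemize}

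This last monotonicity is where $\phi(1)\le2$ is used, and it is the step I expect to be the main obstacle.
\begin{itemize}
\item If $\phi(1)=2$, take $m=1$, $k=1$, so $A=\emptyset$ and $|B|=1$. This gives $\|w\|\le\|w+e_j\|$ for $\|w\|_\infty\le1$ and $j\notin\supp(w)$. Homogeneity and a convexity argument along the line $t\mapsto w+te_j$ then give $\|w\|\le\|w+te_j\|$ for all scalars $t$. Indeed, if $f(t)=\|w+te_j\|<f(0)$ for some small $t$, convexity forces $f(\lambda t)<f(0)$ for all $\lambda\ge1$, which contradicts the inequality once $|\lambda t|\ge\|w\|_\infty$. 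This is suppression unconditionality with constant $1$, which yields the needed monotonicity coordinate by coordinate.
\item If $\phi(1)=1$ and $\phi\equiv\mathrm{id}$, we are in Theorem~\ref{AGchar}.
\item If $\phi(1)=1$ and $\phi\not\equiv\mathrm{id}$, I would try to run the comparison directly. Choose the weak-SLC parameters so that the extra $\gamma(m)=\phi(m)-m$ elements are absorbed, using that $\gamma$ is nondecreasing (Proposition~\ref{propPhiEquivalent}). Then replace the coordinates of $B$ one at a time via convexity, using the instances of weak $\phi$-SLC$_1$ with $|A|<|B|$. The delicate point is that $\phi(1)=1$ provides no single-vector monotonicity for free, so each raising step must be matched with a legitimate pair $(m,k)$.
\end{itemize}

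\emph{Step 3: strictness when $\phi(1)\ge3$.} By Step 1 and Theorem~\ref{M1}, it suffices to build a renorming of $\ell_1$, with the canonical basis still a Schauder basis, that is weak $\phi$-SLC$_1$ but fails \phiAGone{} at $m=1$. Weak SLC tests only flat vectors $1_{\varepsilon A},1_{\delta B}$ on top of a small $x$, whereas \eqref{e1} at $m=1$ tests uneven coefficients. Concretely, I would look for an $x$ supported on about $\phi(1)+1$ coordinates, with two large equal coefficients and the rest slightly smaller, such that removing the $\phi(1)\ge3$ greedy terms leaves a vector of larger norm than removing a single term. The candidate norm is
$$\|x\|\ =\ \max\Bigl(\|x\|_1,\ c\Bigl|\sum_{n\le N}a_ne_n^*(x)\Bigr|\Bigr)$$
for suitable weights $a_n$ on the first $N$ coordinates and a constant $c$, tuned so that the functional term is dominated on every flat configuration allowed by Definition~\ref{defiweakSLC} but is active on the witness $x$. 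The verification that weak $\phi$-SLC$_1$ survives for all admissible $(m,k)$ is a finite case check.
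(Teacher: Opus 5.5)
Your Step 1 is correct. Choosing $D$ where $x$ is small replaces the paper's finite-support reduction, and the factor $1+s$ is harmless but unnecessary, since \eqref{e1} holds for every greedy set. Your reduction in Step 2 to the monotonicity $\|z+t1_{\delta B}\|\le\|z+P_Bx\|$ matches the paper's Proposition~\ref{weakSLC_properties}\,v). The way you then treat $\phi(1)\le2$ has two problems.

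\emph{The case $\phi(1)=1$.} You leave $\phi\ne\mathrm{id}$ open, believing no single-vector inequality is available. It is: Definition~\ref{defiweakSLC} only asks $|A|\le m-k$, so $(m,k)=(1,0)$, $A=\emptyset$, $B=\{j\}$ gives $\|w\|\le\|w+\delta e_j\|$ for $\|w\|_\infty\le1$, exactly as $(m,k)=(1,1)$ does when $\phi(1)=2$.

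\emph{The passage to unconditionality.} Your derivation of $1$-suppression unconditionality from this inequality is invalid. If $f(t)=\|w+te_j\|<f(0)$, convexity only gives the lower bound $f(\lambda t)\ge f(0)+\lambda(f(t)-f(0))$, not $f(\lambda t)<f(0)$. The conclusion is also false in general. For example, $\|(a,b)\|=\max\{|b|,|a-b/2|\}$ on $\mathbb R^2$ satisfies $\|w\|\le\|w+se_j\|$ whenever $j\notin\supp(w)$ and $|s|\ge\|w\|_\infty$, yet $\|e_1\|=1>\tfrac34=\|e_1+\tfrac12e_2\|$. What the inequality does give is suppression quasi-greediness with constant $1$: restore the greedy coefficients in nondecreasing order of modulus. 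Lemma~\ref{BBGlem} then gives $\|T_t(y)\|\le\|y\|$, which is exactly your missing step, since $z+t1_{\delta B}=T_t(z+P_Bx)$. This is the paper's route (items iii), v), vi) of Proposition~\ref{weakSLC_properties}).

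The more serious gap is Step 3, which carries both (a)$\Rightarrow$(b) and the strictness claim. You give no construction, and the family you suggest cannot work. Take $\|x\|=\max\{\|x\|_1,c|\sum_na_ne_n^*(x)|\}$. Weak $\phi$-SLC$_1$ with $x=0$, $A=\{i\}$ and $|B|=\phi(1)$ placed off the support of $(a_n)$ forces $c|a_i|\le\phi(1)$. Meanwhile the $\ell_1$ term gives $\|x-P_Px\|\ge\|z\|_1+|B|$ (your Step 2 notation, $t=1$). So a failure of \eqref{e1} at $m=1$ needs the functional term of $z+P_Ax$ to beat $\|z\|_1+|B|$. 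The flat instances of weak $\phi$-SLC$_1$ then force every flat sum $c\sum_{n\in B}a_n\delta_n$ to be $0$ or large compared with $\phi(1)$. Some choice of signs makes this sum at most $\max_nc|a_n|\le\phi(1)$ in modulus, and flipping one sign moves it by $2c|a_n|$. With these facts one checks that no failure is possible, at least over $\mathbb R$.

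The missing idea is to remove the $\ell_1$ norm from the block $I=\{1,\dots,N\}$, $N=\phi(1)$, so that a large vector on $I$ can be shorter than a single $e_n$. The paper takes $\|x\|=\sum_{n\notin I}|x_n|+\max\{\frac13\max_{n\in I}|x_n|,\sup_{a\in\mathcal A}|\sum_{n\in I}a_nx_n|\}$. Here $\mathcal A$ consists of $a\in[-1,1]^N$ with at most three nonzero entries and $|a_1/2+\sum_{n\in I}a_n|\le\frac12$. Then $x=\frac12e_1+1_I+e_{N+1}$, the greedy set $\{N+1\}\cup I\setminus\{2\}$ and $P=\{N+1\}$ give $\|x-P_Gx\|=\|e_2\|=1>\frac12=\|x-P_Px\|$. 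Weak $\phi$-SLC$_1$ reduces, since $\phi(m)-m\ge N-1$, to pairs with $|B|-|A|\ge N-1$. It is then verified in five cases according to $|B\setminus I|-|A\setminus I|$, using that functionals in $\mathcal A$ have at most three nonzero entries.
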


Together, our first two theorems give the hierarchy
\[
 \phiSLCone\ \Longrightarrow\ \phiAGone\
 \Longrightarrow\ \weakphiSLCone.
\]
Both implications are strict exactly when $\phi(1)\ge3$.

Our final result characterizes \phiAGone{} bases, thus generalizing Theorem \ref{AGchar} to general functions $\phi$. To do so, we introduce right-hand
coefficient robustness in Section~\ref{section_weighted_weak} by strengthening the \weakphiSLCone{} to obtain the \weightedweakphiSLCone{} or equivalently, a homogeneous norm
inequality, called Property $(F_\phi)$. 

\begin{thm}\label{MainWeightedCharacterization}
Let $\phi\in\Phi$ and let $\mathcal B$ be a semi-normalized
Markushevich basis in a real or complex Banach space. The following
conditions are equivalent:
\begin{enumerate}
\item[(a)] $\mathcal B$ satisfies \phiAGone;
\item[(b)] $\mathcal B$ satisfies \weightedweakphiSLCone;
\item[(c)] $\mathcal B$ satisfies \Fphione.
\end{enumerate}
\end{thm}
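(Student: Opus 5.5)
Since the definitions of \weightedweakphiSLCone{} and \Fphione{} appear only later, in Section~\ref{section_weighted_weak}, I fix working versions of them. By \weightedweakphiSLCone{} I mean the inequality
\[
\|x+1_{\varepsilon A}\|\ \le\ \|x+\textstyle\sum_{n\in B}b_ne_n\|
\]
for $x$, $A$, $B$ as in Definition~\ref{defiweakSLC} (so $|A|\le m-k$, $|B|=\phi(m)-k$, disjoint supports, $\|x\|_\infty\le1$), but with the right-hand signs replaced by arbitrary coefficients satisfying $|b_n|\ge1$. By \Fphione{} I mean the homogeneous form: for every $y$ and every $t\ge\|y\|_\infty$,
\[
\|y+t1_{\varepsilon A}\|\ \le\ \|y+z\|
\]
whenever $\supp z=B$ and $\min_{n\in B}|e_n^*(z)|\ge t$, under the same cardinality and disjointness conditions. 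The plan is to prove (a)$\Rightarrow$(c)$\Rightarrow$(b)$\Rightarrow$(a).

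For (c)$\Rightarrow$(b), rescaling with $t=1$ gives the weighted inequality immediately, so this step is essentially tautological. For (a)$\Rightarrow$(c), I would follow the proof of \phiAGone{}$\Rightarrow$\weakphiSLCone{} from Theorem~\ref{M2}. Fix $y,t,A,B,z$ with $|A|\le m-k$ and $|B|=\phi(m)-k$. Choose a set $D$ of $k$ fresh indices disjoint from everything, and put
\[
w\ =\ y+t1_{\varepsilon A}+z+s1_D,\qquad s>\max_{n\in B}|e_n^*(z)|.
\]
Then $D\cup B$ is a greedy set of $w$ of order $\phi(m)$, since every coefficient on $D\cup B$ is $\ge t$ and every coefficient off it is $\le t$. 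Also $|D\cup A|\le m$. Applying \eqref{e1} with the competitor $D\cup A$ yields
\[
\|y+t1_{\varepsilon A}\|\ \le\ \|y+z\|.
\]
Ties at the value $t$ are permitted because greedy sets only need $\min\ge\max$. This is the reason the homogeneous form with $\min|z|\ge t$, rather than $>t$, is the natural statement.

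For (b)$\Rightarrow$(a), fix $x$, $m$, a greedy set $G$ with $|G|=\phi(m)$, and a set $A'$ with $|A'|\le m$. Let $k=|G\cap A'|$, $A=A'\setminus G$ and $B=G\setminus A'$, so that $|A|\le m-k$ and $|B|=\phi(m)-k$. Put $t=\min_{n\in G}|e_n^*(x)|$ and $u=x-P_{G\cup A'}(x)$, so that $\|u\|_\infty\le t$. We must show
\[
\|u+P_A(x)\|\ \le\ \|u+P_B(x)\|.
\]
The coefficients of $x$ on $B$ have modulus $\ge t$, so after dividing by $t$ the right side has the form in (b). On the left, however, $P_A(x)$ has coefficients of modulus $\le t$ rather than equal to $t$. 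The standard device, used by Albiac and Ansorena for Theorem~\ref{AGchar}, is convexity. The map $a\mapsto\|u+\sum_{n\in A}a_ne_n\|$ is convex on the polydisc $\prod_{n\in A}\{|a_n|\le t\}$, so its maximum is attained at a point with $|a_n|=t$ for all $n$. At such a point (b) applies with $\varepsilon_n=a_n/t$. If $t=0$ the inequality is trivial, by a separate easy argument or by a limiting argument. Since $\phi(m)-k$ and $m-k$ match Definition~\ref{defiweakSLC} exactly, no extra monotonicity of $\phi$ is needed here. Proposition~\ref{propPhiEquivalent} may still be used if one wants to allow $|B|\ge\phi(m)-k$.

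The main obstacle I expect is making sure the definition of \weightedweakphiSLCone{} is strong enough for (b)$\Rightarrow$(a) while still being implied by (a). On the right-hand side the coefficients must be allowed to be arbitrary with modulus $\ge1$ (relative to $\|x\|_\infty\le1$), because extreme-point reduction is unavailable there: the norm is convex, so reducing to extreme points only bounds it from above. This is precisely the ``right-hand robustness'' that plain \weakphiSLCone{} lacks, and it is consistent with \weakphiSLCone{} failing to imply (a) when $\phi(1)\ge3$. A secondary check is the degenerate case where $G$ contains zero coefficients, which is handled by $t=0$.
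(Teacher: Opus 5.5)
Your argument follows the paper's proof step for step. For (a)$\Rightarrow$(b) you pad with a set $D$ of $k$ new indices and compare the greedy set $B\cup D$ with the competitor $A\cup D$. For (b)$\Rightarrow$(a) you set $k=|G\cap F|$, remove $P_{F\cup G}$, and use convexity on $A$ and the weighted inequality on $B$.

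\textbf{A step that fails as written.} ``Choose a set $D$ of $k$ fresh indices disjoint from everything'' is impossible when fewer than $k$ indices lie outside $\supp(y)\cup A\cup B$, e.g.\ for $y=\sum_{n\notin A\cup B}2^{-n}e_n$. The paper avoids this by proving the inequality first for finitely supported vectors and then invoking Lemma~\ref{FiniteSupportReduction}. Approximate $y$ by finitely supported $y_r$ with supports avoiding $A\cup B$, $\|y_r-y\|\to0$ and $\|y_r-y\|_\infty\to0$. Rescale by $\min\{1,t/\|y_r\|_\infty\}$ to keep $\|y_r\|_\infty\le t$, apply the finite case, and let $r\to\infty$. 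If $t=0$, then $y=0$ by totality, so no approximation is needed.

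\textbf{Your (c) is not the paper's (c).} The paper's Property \Fphione{} reads $\|x\|\le\|x-P_A(x)+u\|$ whenever:
\begin{enumerate}
\item[(1)] $\supp(u)=B$ is nonempty, finite and disjoint from $A\cup\supp(x)$;
\item[(2)] the sizes satisfy \eqref{weightedWeakSizes};
\item[(3)] $\min_{n\in B}|e_n^*(u)|\ge\|x\|_\infty$.
\end{enumerate}
There $x$ may carry arbitrary coefficients of modulus at most $\|x\|_\infty$ on $A$, while yours carries exactly $t\varepsilon_n$. Your (c) is just (b) rescaled, so your (b)$\Leftrightarrow$(c) is tautological and does not yet reach the stated theorem.

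The missing link is the convexity step you already use in (b)$\Rightarrow$(a); it is the content of Proposition~\ref{FphiWeightedEquivalence}. Given the paper's data, put $\alpha=\min_{n\in B}|e_n^*(u)|$ and $y=x-P_A(x)$. Then $\|x\|\le\sup_\varepsilon\|y+\alpha1_{\varepsilon A}\|$, and each term is at most $\|y+u\|$ by your (c) with $t=\alpha$. Conversely, the paper's (c) applied to $y+t1_{\varepsilon A}$, whose sup norm is at most $t$, yields yours. If $B=\emptyset$, then $A=\emptyset$ and there is nothing to prove.

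Finally, your (b) uses the $(m,k)$ size condition rather than \eqref{weightedWeakSizes}, but these agree by Lemma~\ref{lemmaequiv1}.
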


Sections \ref{suff_con}, \ref{secWeakPhiSLC}, and \ref{section_weighted_weak} prove Theorems \ref{M1}, \ref{M2}, and \ref{MainWeightedCharacterization}, respectively.

\section*{Declaration of generative AI use}
The authors conceived the project, structured the manuscript, and independently formulated and proved Theorems \ref{M1} and \ref{M2} without AI assistance. After completing the initial draft, the authors used OpenAI's ChatGPT \cite{AI} to generate Theorem \ref{MainWeightedCharacterization} and related results in Section \ref{section_weighted_weak}, as well as to review mathematical arguments and identify typographical errors. The authors independently verified all AI-assisted mathematical arguments, corrected and revised the generated material as necessary, and take full responsibility for the accuracy and content of the paper.

\section{Symmetry for largest coefficients and $\phi$-almost greediness}\label{suff_con}
It turns out that \phiSLCone{} is relatively strong compared to \phiAGone{} mainly because of the following observation.
\begin{lem}\label{implySQG1}
A \phiSLCone{} basis is suppression quasi-greedy with constant $1$. 
\end{lem}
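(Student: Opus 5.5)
The plan is to reduce suppression quasi-greediness with constant $1$ to the removal of a single largest coefficient, and to obtain that one-step estimate directly from \phiSLCone{} with $A=\emptyset$.

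\emph{One-step estimate.} Let $z\in X$ and let $n$ be an index with $|e_n^*(z)|=\|z\|_\infty=:t$. We claim that
$$\|z-e_n^*(z)e_n\|\ \le\ \|z\|.$$
If $t=0$ there is nothing to prove, so assume $t>0$ and replace $z$ by $z/t$. Put $y=z-e_n^*(z)e_n$. Then $\|y\|_\infty\le 1$ and $n\notin\supp(y)$.

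Now apply Definition~\ref{defiSLC} with $C=1$ to the vector $y$, with $A=\emptyset$, $B=\{n\}$, and a sign $\delta$ such that $\delta_n=e_n^*(z)$. The cardinality condition $\phi(|A|)=\phi(0)=0\le |B|$ holds. The sets $A$, $B$, $\supp(y)$ are pairwise disjoint, and $1_{\varepsilon A}=0$. Hence
$$\|y\|\ \le\ \|y+\delta_n e_n\|\ =\ \|z\|,$$
which is the claim. One should check that the definition permits arbitrary $x\in X$ with $\|x\|_\infty\le1$, not only finitely supported vectors. It does, as stated.

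\emph{Iteration.} Let $x\in X$, $m\in\mathbb N$ and $G_m(x)=P_\Lambda(x)$ with $\Lambda\in\mathcal G(x,m)$. Order $\Lambda=\{n_1,\dots,n_m\}$ so that $|e_{n_1}^*(x)|\ge\cdots\ge|e_{n_m}^*(x)|$. Set $z_0=x$ and $z_j=z_{j-1}-e_{n_j}^*(x)e_{n_j}$.

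We check that each removal takes a largest coefficient. Since $\Lambda$ is greedy and the $n_j$ are ordered decreasingly, $|e_{n_j}^*(z_{j-1})|=|e_{n_j}^*(x)|$ dominates every coefficient of $z_{j-1}$. Indeed, the coefficients of $z_{j-1}$ are those of $x$ on $\mathbb N\setminus\{n_1,\dots,n_{j-1}\}$, and zero elsewhere. Thus $e_{n_j}^*(z_{j-1})=e_{n_j}^*(x)$ and $|e_{n_j}^*(z_{j-1})|=\|z_{j-1}\|_\infty$.

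The one-step estimate then gives $\|z_j\|\le\|z_{j-1}\|$ for each $j$. Chaining these inequalities yields
$$\|x-G_m(x)\|\ =\ \|z_m\|\ \le\ \|x\|,$$
which is suppression quasi-greediness with constant $1$.

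\emph{Where the work lies.} The argument is essentially immediate once one notices that $A=\emptyset$ is admissible in Definition~\ref{defiSLC}. The only point requiring care is the bookkeeping in the iteration: we must verify that after each removal, the next index of $\Lambda$ still carries a largest coefficient of the current vector. This holds because the coefficients outside the removed indices are unchanged and $\Lambda$ is greedy.
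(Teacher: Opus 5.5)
Your proof is correct and follows the paper's argument: both take $A=\emptyset$ and a singleton $B$ in Definition~\ref{defiSLC} to obtain the one-coefficient estimate, then chain it along the greedy set. Removing coefficients in decreasing order, as you do, is the paper's ``add greedy coefficients back in nondecreasing order of modulus'' read in reverse, and choosing the sign $\delta_n=e_n^*(z)$ directly replaces the paper's appeal to scaling and convexity.
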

\begin{proof}
Pick $x\in X$ with $\|x\|_\infty\le 1$ and  $j\in \mathbb{N}\backslash \supp(x)$. In the definition of $\phi$-SLC, we can choose $A = \emptyset$ and $B = \{j\}$ to obtain 
$$\|x\|\ \le\ \|x+e_j\|.$$
Scaling and convexity give the same estimate when
$e_j$ is replaced by $t e_j$ for $|t|\ge\|x\|_\infty$.
Adding greedy coefficients in nondecreasing order of modulus now
gives suppression quasi-greediness with constant $1$.
\end{proof}

We shall use Lemma~\ref{implySQG1} to show that there are bases satisfying \eqref{e1} but failing \phiSLCone{}.
Before proving Theorem \ref{M1}, we need an easy reformulation of $\phi$-SLC.

\begin{prop}\label{ezcharSLC}
A basis is $\phi$-SLC with constant $C$ if and only if 
\begin{equation}\label{e12}\|x\|\ \le\ C\|x-P_A(x)+1_{\varepsilon B}\|,\end{equation}
for all $x\in X$ with $\|x\|_\infty\le 1$, for all $A, B\in \mathbb{N}^{<\infty}$ with $\phi(|A|)\le |B|$ and $B$ disjoint from $A\cup \supp(x)$, and for all signs $\varepsilon$. 
\end{prop}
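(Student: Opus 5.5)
The plan is to prove the two implications separately, each by choosing the data in one inequality so that it becomes an instance of the other.

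\emph{$\phi$-SLC with constant $C$ implies \eqref{e12}.} Fix $x$ with $\|x\|_\infty\le1$, a finite set $A$, a set $B$ disjoint from $A\cup\supp(x)$ with $\phi(|A|)\le|B|$, and a sign $\varepsilon$. Put
$$y:=x-P_A(x)\quad\text{and}\quad A':=A\cap\supp(x).$$
Then $\|y\|_\infty\le1$, and $y$, $A'$, $B$ are pairwise disjoint. Since $\phi$ is increasing, $\phi(|A'|)\le\phi(|A|)\le|B|$. The coefficients $e_n^*(x)$ with $n\in A'$ may have any modulus in $(0,1]$, so $P_A(x)$ is not literally of the form $1_{\eta A'}$ and Definition~\ref{defiSLC} does not apply directly. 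We handle this by convexity. For fixed $y$, $B$, $\varepsilon$, the map
$$(t_n)_{n\in A'}\ \mapsto\ \Bigl\|y+\sum_{n\in A'}t_ne_n\Bigr\|$$
is convex on the polydisc $\{|t_n|\le1\}$. Its maximum is therefore attained at a point with $|t_n|=1$ for all $n$; equivalently, each point of the polydisc is a convex combination of points of the torus, coordinate by coordinate. At every such point $t=\eta$, the $\phi$-SLC inequality with $A'$ gives
$$\|y+1_{\eta A'}\|\ \le\ C\|y+1_{\varepsilon B}\|.$$
Taking $t_n=e_n^*(x)$ we obtain
$$\|x\|\ =\ \Bigl\|y+\sum_{n\in A'}e_n^*(x)e_n\Bigr\|\ \le\ C\|y+1_{\varepsilon B}\|\ =\ C\|x-P_A(x)+1_{\varepsilon B}\|,$$
which is \eqref{e12}.

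\emph{\eqref{e12} implies $\phi$-SLC with constant $C$.} Fix $x$, $A$, $B$, $\varepsilon$, $\delta$ as in Definition~\ref{defiSLC}, so that $A$, $B$, $\supp(x)$ are pairwise disjoint and $\phi(|A|)\le|B|$. Apply \eqref{e12} to
$$z:=x+1_{\varepsilon A}.$$
Then $\|z\|_\infty\le1$, and $B$ is disjoint from $A\cup\supp(z)=A\cup\supp(x)$. Moreover $z-P_A(z)=x$, because $A$ is disjoint from $\supp(x)$. Using the sign $\delta$ in \eqref{e12}, we get
$$\|x+1_{\varepsilon A}\|\ \le\ C\|x+1_{\delta B}\|,$$
which is the $\phi$-SLC inequality.

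The only step that needs care is the convexity argument in the first direction. One has to check the extreme-point reduction in both the real and the complex case. In the real case, $[-1,1]$ is the convex hull of $\{\pm1\}$. In the complex case, the closed disc is the convex hull of the unit circle. Applying this one coordinate at a time, with the other coordinates held fixed, handles all of $A'$.
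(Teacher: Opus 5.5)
Your proof is correct and is essentially the paper's argument: convexity reduces the coefficients of $P_A(x)$ to unimodular ones in the forward direction, and substituting $x+1_{\varepsilon A}$ into the norm inequality gives the converse. The only difference is your passage to $A'=A\cap\supp(x)$, which is harmless but unnecessary: the zero coefficients on $A\setminus\supp(x)$ are also averages of unimodular scalars, so the paper applies $\phi$-SLC to $A$ itself and does not need $\phi(|A'|)\le\phi(|A|)$.
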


\begin{proof}
Assume that our basis is $\phi$-SLC with constant $C$. Pick $x, A, B$, and $\varepsilon$ as in \eqref{e12}. By convexity, we have
$$\|x\|\ \le\ \sup_\delta\|x-P_A(x)+1_{\delta A}\|\ \le\ C\|x-P_A(x)+1_{\varepsilon B}\|.$$
Conversely, suppose that our basis satisfies \eqref{e12}. Pick $x, A, B, \varepsilon$, and $\delta$ as in Definition \ref{defiSLC} and set $y = x+1_{\varepsilon A}$. It follows that
$$\|x+1_{\varepsilon A}\|\ = \ \|y\|\ \le\ C\|y-P_A(y) + 1_{\delta B}\|\ =\ C\|x+1_{\delta B}\|.$$
This completes our proof. 
\end{proof}

\begin{lem}\label{l1}
If $\phi(1)\in\{1,2\}$, and $\mathcal{B}$ is \phiAGone{}, then $\mathcal{B}$ is suppression quasi-greedy with constant $1$. 
\end{lem}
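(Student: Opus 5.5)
The plan is to reduce suppression quasi-greediness with constant $1$ to a one-coordinate inequality, exactly as in the proof of Lemma~\ref{implySQG1}, and to extract that inequality from \eqref{e1} with $m=1$. We treat the two values of $\phi(1)$ separately.

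\emph{Case $\phi(1)=1$.} Here \eqref{e1} with $m=1$ and the competitor $A=\emptyset$ gives $\|x-G_1(x)\|\le\|x\|$ for every $x$ and every greedy set of order $1$. Let $G\in\mathcal G(x,m)$. Order it as $G=\{n_1,\dots,n_m\}$ with nonincreasing coefficient moduli. Then $\{n_{k+1}\}$ is a greedy set of order $1$ of $y_k:=x-P_{\{n_1,\dots,n_k\}}(x)$. Applying the inequality to $y_k$ gives $\|y_{k+1}\|\le\|y_k\|$. Chaining these from $k=0$ to $m-1$ yields $\|x-P_G(x)\|\le\|x\|$.

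\emph{Case $\phi(1)=2$.} The key claim is the following. Let $z$ be finitely supported, $j\notin\supp(z)$, and $|t|\ge\|z\|_\infty$. Then
\[
\|z\|\ \le\ \|z+te_j\|.
\]
To prove it, pick an index $i\notin\supp(z)\cup\{j\}$, which exists because $\supp(z)$ is finite. Set $x=z+te_j+te_i$. Then $\{i,j\}\in\mathcal G(x,2)=\mathcal G(x,\phi(1))$. Apply \eqref{e1} with $m=1$ and competitor $A=\{i\}$. This gives $\|z\|=\|x-G_2(x)\|\le\|x-P_{\{i\}}(x)\|=\|z+te_j\|$.

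With the claim in hand, take a finitely supported $x$ and $G\in\mathcal G(x,m)$, and start from $z=x-P_G(x)$. Add back the coefficients $e_n^*(x)e_n$, $n\in G$, one at a time in nondecreasing order of modulus. At each step the coefficient being added dominates the sup norm of the current vector, because every coefficient outside $G$ is at most $\min_G|e_n^*(x)|$. Moreover, the current vector stays finitely supported. So each step does not decrease the norm, and therefore $\|x-P_G(x)\|\le\|x\|$.

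The main obstacle is passing from finitely supported vectors to general $x$ in the case $\phi(1)=2$, since the auxiliary index $i$ may not exist when $\supp(z)$ is cofinite. The fix is to approximate while keeping $G$ greedy. Write $x=P_G(x)+w$ and set $\mu=\min_{n\in G}|e_n^*(x)|$. If $\mu=0$ the inequality is trivial, since then $\|w\|_\infty=0$, so $w=0$ by totality of the biorthogonal functionals. Otherwise choose finitely supported $w_k\to w$ in norm, which is possible by density of the span. Because $\sup_n\|e_n^*\|<\infty$, we have $\|w_k\|_\infty\to\|w\|_\infty\le\mu$. Rescaling $w_k$ by a factor tending to $1$, we may assume $\|w_k\|_\infty\le\mu$, and removing $G$ from $\supp(w_k)$ changes it by $P_G(w_k)\to P_G(w)=0$. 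Then $G$ is a greedy set of $x_k=P_G(x)+w_k$. The finite case gives $\|w_k\|\le\|x_k\|$, and letting $k\to\infty$ gives $\|x-P_G(x)\|\le\|x\|$.
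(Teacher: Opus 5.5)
Your proof is correct and follows essentially the paper's argument. The key step is exactly the paper's: add an auxiliary index $i$ carrying the same coefficient, use the competitor $\{i\}$ (of size $\phi(1)-1$) in \eqref{e1} with $m=1$, and then add the greedy coefficients back in nondecreasing order of modulus. The paper treats both values of $\phi(1)$ at once by taking $|A|=\phi(1)-1$.

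The only differences are organizational. You handle $\phi(1)=1$ directly for all $x$. You also do the density step at the end, approximating $x-P_G(x)$ while keeping $G$ greedy, rather than first extending the one-coordinate inequality via Lemma~\ref{FiniteSupportReduction}. Your version is, if anything, more explicit than the paper about preserving the coefficient bound.
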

\begin{proof}Pick a finitely supported $x\in X$ with $\|x\|_\infty\le 1$ and $j\notin \supp(x)$. Choose $A\in \mathbb{N}^{<\infty}$ such that $A\cap (\supp(x)\cup \{j\}) = \emptyset$ and $|A| = \phi(1)-1$. 
By \eqref{e1},
$$\|x\|\ =\ \|(x+e_j+1_A)-(e_j+1_A)\|\ \le\ \|(x+e_j+1_A)-1_A\|\ =\ \|x+e_j\|.$$
The inequality extends from finitely supported $x$ to all $x$ by
Lemma \ref{FiniteSupportReduction}. Iterating the resulting
coefficient-adding inequality along a greedy set shows that
$\mathcal{B}$ is suppression quasi-greedy with constant $1$.
\end{proof}

For $b\in \mathbb{F}$ and $b\neq 0$,  let $\sgn(b) = b/|b|$. 
We define the truncation operator $T_a:X\to X$ by the finite-set formula
\[
T_a(x)\ =\ x-P_{\Lambda_a(x)}(x)+a1_{\varepsilon\Lambda_a(x)},
\]
where $\Lambda_a(x)=\{n:|e_n^*(x)|>a\}$ and
$\varepsilon=(\sgn(e_n^*(x)))_n$. 

\begin{lem}\cite[Lemma 2.5]{BBG}\label{BBGlem} If a basis is suppression quasi-greedy with constant $C$, 
then $\|T_a(x)\|\le C\|x\|$ for every $x\in X$ and $a > 0$.
\end{lem}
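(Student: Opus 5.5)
The plan is to reduce the truncated vector to a convex combination of vectors obtained from $x$ by greedy-type suppressions, and then apply the suppression quasi-greedy inequality to each of them. Fix $x\in X$ and $a>0$, and write $\Lambda=\Lambda_a(x)$. Since $(e_n^*(x))_n\in c_0$, the set $\Lambda$ is finite. Enumerate it in nonincreasing order of modulus,
\[
|e_{n_1}^*(x)|\ \ge\ |e_{n_2}^*(x)|\ \ge\ \cdots\ \ge\ |e_{n_k}^*(x)|\ >\ a .
\]
For $0\le j\le k$ put $A_j=\{n_1,\dots,n_j\}$. The key point is that each $A_j$ is a greedy set of $x$ of order $j$. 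For $j\ge1$ this holds because every coefficient outside $A_j$ is either in $\Lambda\setminus A_j$ (hence has modulus at most $|e_{n_j}^*(x)|$) or outside $\Lambda$ (hence has modulus at most $a$). Consequently $\|x-P_{A_j}(x)\|\le C\|x\|$ for every $j$, and the case $j=0$ is trivial since $C\ge1$.

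Next we express $T_a(x)$ as a combination of the vectors $y_j:=x-P_{A_j}(x)+\sum_{i\le j} t_i\,\varepsilon_{n_i}e_{n_i}$ in a way that makes every coefficient comparable. Concretely, write $b_i=|e_{n_i}^*(x)|$ and $b_{k+1}:=a$. The $n_i$-th coefficient of $T_a(x)$ is $a\varepsilon_{n_i}=\frac{a}{b_i}e_{n_i}^*(x)$. We want to write
\[
T_a(x)\ =\ \sum_{j=0}^{k}\lambda_j\,\bigl(x-P_{A_j}(x)\bigr)
\]
with $\lambda_j\ge0$ and $\sum_j\lambda_j=1$, since then $\|T_a(x)\|\le C\|x\|$ follows from the triangle inequality. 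Coefficients outside $\Lambda$ are unchanged on both sides, because each $x-P_{A_j}(x)$ agrees with $x$ there. On $n_i$, the right-hand side has coefficient $\bigl(\sum_{j<i}\lambda_j\bigr)e_{n_i}^*(x)$. We therefore need
\[
\sum_{j<i}\lambda_j\ =\ \frac{a}{b_i},\qquad i=1,\dots,k .
\]
So we set $\lambda_0=a/b_1$, $\lambda_j=a/b_{j+1}-a/b_j$ for $1\le j\le k-1$, and $\lambda_k=1-a/b_k$. These are nonnegative because $b_1\ge\cdots\ge b_k>a$. They telescope to $1$.

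This would complete the argument. The only step requiring care is to check that $A_j$ is genuinely a greedy set, including when ties occur between coefficients of equal modulus. Ties inside $\Lambda$ are harmless, since any ordering among equal moduli still yields greedy sets. There are no ties across the threshold, because elements of $\Lambda$ have modulus strictly greater than $a$ and all others have modulus at most $a$. I expect no real obstacle beyond this bookkeeping. The convex-combination identity is the heart of the proof, and once the weights are chosen by the telescoping rule above, it is verified coordinatewise. Two vectors agreeing in all coordinates are equal because $\bigcap_n\ker e_n^*=\{0\}$, and this applies here since all the vectors involved differ from $x$ by finitely supported vectors.
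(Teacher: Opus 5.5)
Your proof is correct. The weights $\lambda_0=a/b_1$, $\lambda_j=a/b_{j+1}-a/b_j$, $\lambda_k=1-a/b_k$ exhibit $T_a(x)$ as a convex combination of the suppressions $x-P_{A_j}(x)$ along nested greedy sets, which as far as I recall is essentially the standard argument behind \cite[Lemma 2.5]{BBG}; the paper only cites that source and gives no proof of its own, and this convex-combination route is what yields the sharp constant $C$. Two cosmetic points: the trivial case $\Lambda_a(x)=\emptyset$ (where $T_a(x)=x$) falls outside your weight formulas, and the vectors $y_j$ you introduce are never used.
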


We are now ready to prove Theorem \ref{M1}.

\begin{proof}[Proof of Theorem~\ref{M1}: (b) implies (a)]
Assume $\phi(1)\le2$. We shall show that a basis $\mathcal{B}$ is \phiAGone{} if and only if it is \phiSLCone{}. By Lemmas \ref{implySQG1} and \ref{l1}, $\mathcal{B}$ is suppression quasi-greedy with constant $1$. 

First, assume that $\mathcal{B}$ is \phiAGone{}. Pick $x, A, B, \varepsilon$, and $\delta$ as in Definition \ref{defiSLC}. If $A=\emptyset$, then $B$ is a greedy set for $x+1_{\delta B}$, so suppression quasi-greediness with constant $1$ gives $\|x\|\le\|x+1_{\delta B}\|$. Thus assume $A\ne\emptyset$. Set $y = x+ 1_{\varepsilon A} + 1_{\delta B}$, and choose $B'\subset B$ with $|B'| = \phi(|A|)$. We have
\begin{align*}
 \|x+1_{\varepsilon A}\|\ =\ \|y - P_B(y)\|  \ \le\ \|y-P_{B'}(y)\|\ \le\ \|y-P_A(y)\|\ =\ \|x+1_{\delta B}\|.
\end{align*}

Second, assume that $\mathcal{B}$ is \phiSLCone{}. Let $x\in X$, $m\in \mathbb{N}$, $A\in\mathbb{N}^{<\infty}$ with $|A| \le m$, and $B\in \mathcal{G}(x, \phi(m))$. Let $\alpha = \min_{n\in B}|e_n^*(x)|$. When $\alpha=0$,
totality of $(e_n^*)$ gives $x=P_B(x)$, so the desired estimate is
immediate. Otherwise, Condition \eqref{e20} gives
$\phi(|A\backslash B|)\le |B\backslash A|$. Let $\varepsilon=(\sgn(e_i^*(x)))_{i\in B}$. By Proposition \ref{ezcharSLC} and Lemma \ref{BBGlem}, we have
\begin{align*}
\|x-P_B(x)\| &\ \le\ \|x-P_{B}(x) - P_{A\backslash B}(x) + \alpha 1_{\varepsilon B\backslash A}\|\\
&\ =\ \|T_\alpha(x-P_{A}(x))\|\ \le\ \|x-P_A(x)\|. 
\end{align*}
This completes our proof. 
\end{proof}

To complete the proof of Theorem~\ref{M1}, suppose $\phi(1)\ge3$. We construct a basis that
\begin{enumerate}
\item [a)] is \phiAGone{} but 
\item [b)] is not $1$-suppression quasi-greedy and thus, is not \phiSLCone{}. 
\end{enumerate}

Let $X$ be the completion of $c_{00}$ with respect to the following norm
$$\|x = (x_i)_{i=1}^\infty\|\ =\ \left|x_1 + \frac{x_2}{3}\right| \vee \left|\frac{x_1}{3} + x_2\right| \vee \frac{1}{3}\sum_{i=1}^\infty|x_i|.$$
Clearly, $X$ is a renorming of $\ell_1$, and the canonical basis $\mathcal{B} = (e_n)_{n=1}^\infty$ is semi-normalized. 

\begin{claim}
The basis $\mathcal{B}$ is not $1$-suppression quasi-greedy. 
\end{claim}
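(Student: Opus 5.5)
It suffices to exhibit a single vector $x\in X$ and a greedy set $G\in\mathcal G(x,m)$ with $\|x-P_G(x)\|>\|x\|$. The natural candidate uses only the first two coordinates, where the norm is dominated by the two "mixed" terms $|x_1+x_2/3|$ and $|x_1/3+x_2|$. The plan is to choose $x=e_1+te_2$ with $|t|\le 1$, so that $\{1\}$ is a greedy set of order $1$. Then $x-G_1(x)=te_2$, and we want
\[
\|te_2\|\ >\ \|e_1+te_2\|.
\]
We have $\|te_2\|=|t|\max\{1/3,1,1/3\}=|t|$, while
\[
\|e_1+te_2\|\ =\ \max\Bigl\{\bigl|1+\tfrac t3\bigr|,\ \bigl|\tfrac13+t\bigr|,\ \tfrac{1+|t|}{3}\Bigr\}.
\]

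Taking $t=-1$ makes the comparison explicit. The left side is $\|{-e_2}\|=1$, and the right side is
\[
\max\Bigl\{\tfrac23,\ \tfrac23,\ \tfrac23\Bigr\}\ =\ \tfrac23.
\]
So for $x=e_1-e_2$ and the greedy set $\{1\}$ (valid since $|x_1|=|x_2|=1$), we get $\|x-G_1(x)\|=1>\tfrac23=\|x\|$. This violates suppression quasi-greediness with constant $1$.

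The only thing to double-check is that the norm is computed correctly, in particular that the $\ell_1$ term $\frac13\sum|x_i|$ does not dominate for $x=e_1-e_2$; it gives $2/3$, which ties the other terms. Beyond that, the computation is routine and presents no real obstacle.
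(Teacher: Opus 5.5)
Your argument is correct and follows the paper's route: a two-coordinate vector $e_1+te_2$ with $t<0$, where removing the greedy coordinate leaves $te_2$ of norm $|t|>\|e_1+te_2\|$. The paper takes $t=-5/6$ (so $\|x\|=13/18<5/6$), which makes $\{1\}$ the unique greedy set, whereas your choice $t=-1$ relies on a tie; this is legitimate because the definition quantifies over every $G_m(x)\in\mathcal{G}(x,m)$.
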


\begin{proof}
Consider the vectors $x = (1, -5/6, 0, 0, \ldots)$ and $y = (0, -5/6, 0, 0, \ldots)$. We have $\|y\| = 5/6$, while
$$\|x\|\ =\ \max\left\{\frac{13}{18},\frac{1}{2},\frac{11}{18}\right\}\ =\ \frac{13}{18}.$$
Since $\|y\| > \|x\|$, $\mathcal{B}$ is not $1$-suppression quasi-greedy.   
\end{proof}

\begin{claim}
The basis $\mathcal{B}$ is \phiAGone{}.     
\end{claim}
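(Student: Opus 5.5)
The plan is to verify \eqref{e1} directly, comparing the three terms in the definition of $\|\cdot\|$ one at a time. Fix $x\in X$, $m\in\mathbb N$, $A$ with $|A|\le m$, and $B\in\mathcal G(x,\phi(m))$. Put $y=x-P_B(x)$, $z=x-P_A(x)$ and $\alpha=\min_{n\in B}|e_n^*(x)|$. If $\alpha=0$, then $y=0$ and there is nothing to prove, so assume $\alpha>0$.

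The only place where $\phi(1)\ge3$ enters is the following cardinality fact. By Proposition~\ref{propPhiEquivalent}(iii), $\phi(m)-m\ge\phi(1)-1\ge2$. Hence
\[
|B\setminus A|\ \ge\ |A\setminus B|+2 .
\]
Every coefficient indexed by $B\setminus A$ has modulus at least $\alpha$, and every coefficient outside $B$ has modulus at most $\alpha$. We never need the finitely supported reduction, because $X$ is just $\ell_1$ with an equivalent norm and all sums converge.

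\emph{The $\ell_1$ term.} We have
\[
\sum_{i\notin B}|x_i|\ =\ \sum_{i\notin A\cup B}|x_i|+\sum_{A\setminus B}|x_i|
\ \le\ \sum_{i\notin A\cup B}|x_i|+\sum_{B\setminus A}|x_i|\ =\ \sum_{i\notin A}|x_i| .
\]
The middle step holds because $A\setminus B$ has fewer elements than $B\setminus A$, each of modulus at most $\alpha$, while each element of $B\setminus A$ has modulus at least $\alpha$. So $\tfrac13\|y\|_{\ell_1}\le\tfrac13\|z\|_{\ell_1}\le\|z\|$. Moreover $\tfrac13\sum_{i\notin A}|x_i|$ is at least $\tfrac13\bigl(\sum_{B\setminus A}|x_i|+\sum_{i\notin A\cup B}|x_i|\bigr)$, which gives a reservoir of at least $\tfrac13(|A\setminus B|+2)\alpha$ plus the leftover coefficients.

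\emph{The two-coordinate terms.} By symmetry it suffices to bound $|y_1+y_2/3|$; the term $|y_1/3+y_2|$ is handled in the same way. We split by the positions of the indices $1,2$.
\begin{itemize}
\item If $1\in B$, then $|y_1+y_2/3|=|y_2|/3$, which is at most the $\ell_1$ term of $y$ and is handled above.
\item If $2\in B$ and $1\notin B$, then the term is $|x_1|\le\alpha$.
  \begin{itemize}
  \item If $1\notin A$, then $\|z\|\ge\tfrac13(|x_1|+2\alpha)\ge|x_1|$.
  \item If $1\in A$, then $|B\setminus A|\ge3$, so $\|z\|\ge\alpha\ge|x_1|$.
  \end{itemize}
\item If $1,2\notin B$, then $y_1=x_1$ and $y_2=x_2$.
  \begin{itemize}
  \item If $1,2\notin A$, then $\|z\|\ge|x_1+x_2/3|$ directly.
  \item If exactly one of $1,2$ lies in $A$, say $1$, then $|A\setminus B|\ge1$. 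Hence $\tfrac13\sum_{i\notin A}|x_i|\ge\tfrac13(3\alpha+|x_2|)\ge|x_1|+|x_2|/3$. The case $2\in A$, $1\notin A$ is similar, using $|x_2|/3\le\alpha$.
  \item If $1,2\in A$, then $|A\setminus B|\ge2$, so the reservoir is at least $\tfrac43\alpha\ge|x_1|+|x_2|/3$.
  \end{itemize}
\end{itemize}

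Taking the maximum of the three bounds gives $\|y\|\le\|z\|$. The main obstacle is the bookkeeping in the cases where $1$ or $2$ lies in $A$. There the cross term $|x_1+x_2/3|$ of $y$ is not visible in $z$, and we must pay for it through the $\tfrac13\ell_1$ part of $\|z\|$. This works only because $B\setminus A$ contributes at least $|A\setminus B|+2$ coefficients of size at least $\alpha$, which is exactly where the hypothesis $\phi(1)\ge3$ is needed.
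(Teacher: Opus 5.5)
Your proof is correct and is essentially the paper's argument: the same case analysis on how $\{1,2\}$ meets the greedy set and the competitor, with the cross terms paid for by the $\tfrac13\ell_1$ part via the surplus $|B\setminus A|\ge|A\setminus B|+2$ (the paper phrases this surplus as a count of large coefficients surviving the competitor's projection), and your $1\leftrightarrow2$ symmetry reduction to a single cross term replacing the paper's reduction to $x_1,x_2>0$. The only slip is in the subcase $2\in A$, $1\notin A$, $1,2\notin B$: the needed bound $\alpha+|x_1|/3\ge|x_1|+|x_2|/3$ amounts to $\tfrac23|x_1|+\tfrac13|x_2|\le\alpha$, so it uses both $|x_1|\le\alpha$ and $|x_2|\le\alpha$, not just $|x_2|/3\le\alpha$.
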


\begin{proof}
Let $x = (x_i)_{i=1}^\infty\in X$, $m\in \mathbb{N}$, $A\in \mathcal{G}(x, \phi(m))$, and $B\in \mathbb{N}^{<\infty}$ with $|B|\le m$. We shall show that 
$$\|x-P_A(x)\|\ \le\ \|x-P_B(x)\|.$$
We proceed by case analysis.

Case 1: $\{1, 2\}\subset A$.  Then
$$\|x-P_A(x)\|\ =\ \frac{1}{3}\sum_{\substack{i\ge 1\\ i\notin A}}|x_i|\ \le\  \frac{1}{3}\sum_{\substack{i\ge 1\\ i\notin B}}|x_i|\ \le\ \|x-P_B(x)\|.$$

Case 2: $|\{1, 2\}\cap A| = 1$. Without the loss of generality, assume that $1\in A$ and $2\notin A$. In this case,
$$\|x-P_A(x)\|\ =\ |x_2| \vee \frac{1}{3}\sum_{\substack{i\ge 1\\ i\notin A}}|x_i|.$$
As in Case 1, $3^{-1}\sum_{\substack{i\ge 1\\ i\notin A}}|x_i|\le \|x-P_B(x)\|$. We need only to show that
$$\|x-P_B(x)\|\ \ge\ |x_2|.$$
Since $A\in \mathcal{G}(x, \phi(m))$ and $2\notin A$, we know that there are at least $\phi(m)+1$ coefficients $x_i$ whose modulus is at least $|x_2|$. Hence,
$$\|x-P_B(x)\|\ \ge\ \frac{\phi(m)+1-m}{3}|x_2|\ \ge\ |x_2|,$$
where the last inequality follows from $\phi(m) - m\ge\phi(1)-1\ge 2$ for all $m\in \mathbb{N}$. 

Case 3: $\{1, 2\}\cap A = \emptyset$. Then
$$\|x-P_A(x)\|\ =\ \left|x_1 + \frac{x_2}{3}\right|\vee \left|\frac{x_1}{3} + x_2\right|\vee \frac{1}{3}\sum_{\substack{i\ge 1\\ i\notin A}}|x_i|.$$
As before, we need only to verify that
\begin{equation}\label{e14}\|x-P_B(x)\|\ \ge\  \left|x_1 + \frac{x_2}{3}\right|\vee \left|\frac{x_1}{3} + x_2\right|\ =:\ g(x_1, x_2).\end{equation}
Let $a = \max\{|x_1|, |x_2|\}$. 
Since $A\in \mathcal{G}(x, \phi(m))$ and $\{1, 2\}\cap A = \emptyset$, there are at least $\phi(m)+1$ coefficients $x_i$ with $|x_i|\ge a$. If $x_1x_2\le 0$, then $g(x_1, x_2)\le a$. Then 
$$\|x-P_B(x)\|\ \ge\ \frac{\phi(m)+1-m}{3}a\ \ge\ a\ \ge\ g(x_1, x_2).$$
For the rest of the proof, by replacing $x$ with $-x$ if necessary, we may assume $x_1,x_2>0$. 

\begin{itemize}
\item Case 3a: $B\cap \{1, 2\} = \emptyset$. It follows immediately that
$\|x-P_B(x)\|\ \ge\ g(x_1, x_2)$.
\item Case 3b: $\{1, 2\}\subset B$. Note that since $\{1, 2\}\cap A = \emptyset$, there exist at least $\phi(m)$ coefficients $x_i$ with $|x_i|\ge a$ and $i\ge 3$. It follows that
$$\|x-P_B(x)\|\ \ge\ \frac{\phi(m)-(m-2)}{3}a\ \ge\ \frac{4a}{3}\ \ge\ \left(x_1 + \frac{x_2}{3}\right)\vee \left(\frac{x_1}{3} + x_2\right).$$
\item Case 3c: $|\{1, 2\}\cap B| = 1$. We may assume that $1\in B$ and $2\notin B$. We have
\begin{align*}\|x-P_B(x)\|\ \ge\ \frac{x_2}{3} + \frac{1}{3}\sum_{\substack{i\ge 3\\ i\notin B}}|x_i|&\ \ge\ \frac{x_2}{3} + \frac{\phi(m)-(m-1)}{3}a\\
&\ \ge\ \frac{x_2}{3} + a,\end{align*}
which can be easily seen to be at least $g(x_1, x_2)$. 
\end{itemize}
This completes our proof that $\mathcal{B}$ is \phiAGone{}. 
\end{proof}

\begin{proof}[Completion of the proof of Theorem~\ref{M1}]
The preceding counterexample applies whenever $\phi(1)\ge3$.
We have a renorming of $\ell_1$, where the canonical basis is $\phiAGone$ but not $\phiSLCone$. Together with the first part of the proof,
this establishes Theorem \ref{M1}.
\end{proof}

\section{Weak symmetry for largest coefficients}\label{secWeakPhiSLC}
By Theorem~\ref{M1}, \phiSLCone{} implies \phiAGone{}
for every $\phi\in\Phi$, while the converse may fail. This motivates
the weaker notion introduced below. Our aim is to establish the hierarchy
\[
 \phiSLCone\ \Longrightarrow\ \phiAGone
 \ \Longrightarrow\ \mbox{weak } \phiSLCone.
\]

We have an equivalent definition of weak $\phi$-SLC. 

\begin{defi}\label{defiweakSLC_2}\normalfont
A basis is \textit{weak $\phi$-SLC with constant $C$} if
$$\|x+1_{\varepsilon A}\|\ \le\ C\|x+1_{\delta B}\|,$$
for all $x\in X$ with $\|x\|_\infty\le 1$, for all signs $\varepsilon, \delta$, and for all $A, B\in \mathbb{N}^{<\infty}$ such that
\begin{enumerate}
\item $A, B, \supp(x)$ are pairwise disjoint, and
\item there exists $m\ge 1$ with $|B|-|A|\ge \phi(m)-m$ and $|B|\le \phi(m)$.
\end{enumerate}
\end{defi}

\begin{lem}\label{lemmaequiv1}
    Definition \ref{defiweakSLC_2} is equivalent to Definition \ref{defiweakSLC}.
\end{lem}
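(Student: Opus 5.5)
The two definitions share the same requirements on $x$, on the signs and on disjointness. So I will only show that the cardinality conditions on the pair $(|A|,|B|)$ agree. Write $a=|A|$ and $b=|B|$.

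Condition (2) of Definition~\ref{defiweakSLC} asks for $m\ge1$ and $k\ge0$ with $a\le m-k$ and $b=\phi(m)-k$. Condition (2) of Definition~\ref{defiweakSLC_2} asks for $m\ge1$ with $b-a\ge\phi(m)-m$ and $b\le\phi(m)$. I will prove that, for a fixed pair $(a,b)$, these two conditions are equivalent.

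\emph{Definition~\ref{defiweakSLC} implies Definition~\ref{defiweakSLC_2}.} Take the same $m$. Then $b=\phi(m)-k\le\phi(m)$, because $k\ge0$. Also
\[
b-a\ \ge\ (\phi(m)-k)-(m-k)\ =\ \phi(m)-m .
\]

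\emph{Definition~\ref{defiweakSLC_2} implies Definition~\ref{defiweakSLC}.} Take the same $m$ and set $k=\phi(m)-b$. Since $b\le\phi(m)$, we have $k\ge0$, and $b=\phi(m)-k$ holds by construction. Finally,
\[
a\ \le\ b-(\phi(m)-m)\ =\ m-(\phi(m)-b)\ =\ m-k .
\]

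Two small points remain. First, Definition~\ref{defiweakSLC} implicitly needs $m-k\ge a\ge0$, and this is already part of the last inequality. Second, $k$ is allowed to exceed nothing beyond $\phi(m)$, since $k=\phi(m)-b$ with $b\ge0$, so the cardinality $\phi(m)-k$ is a genuine nonnegative integer.

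No step here is a real obstacle; the argument is pure bookkeeping of the inequalities, and no property of $\phi\in\Phi$ beyond integrality is used. Since the two admissible classes of pairs $(A,B)$ coincide, the two definitions impose exactly the same inequalities with the same constant $C$.
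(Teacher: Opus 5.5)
Your proposal is correct and matches the paper's proof: the forward direction keeps the same $m$ and uses $k\ge0$, and the converse keeps the same $m$ and sets $k=\phi(m)-|B|$. Your closing remark about $k$ is garbled as written, but the point it seems to intend ($k\le\phi(m)$ because $|B|\ge0$) is not needed for the argument.
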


\begin{proof}
    If there exists $m\ge 1$ and $k\ge 0$ so that $|A|\le m-k$ and $|B| = \phi(m)-k$, then
    $$|B| - |A|\ \ge\ (\phi(m)-k)-(m-k)\ =\ \phi(m) - m\mbox{ and }|B|\ \le\ \phi(m).$$

    Conversely, if there exists $m\ge 1$ so that $|B| - |A|\ge \phi(m)-m$ and $|B|\le \phi(m)$, 
    then $\phi(m) - |B|\le m - |A|$. Let $k:= \phi(m)-|B|\ge 0$. Then $k\le m-|A|$, so $|A|\le m-k$. 
\end{proof}

We record several facts about weak $\phi$-SLC. 
\begin{prop}\label{weakSLC_properties} The following statements
hold for every $\phi\in \Phi$.
\begin{enumerate}
\item [i)] If a basis is $\phi$-AG with constant $C$, then it is weak $\phi$-SLC with constant $C$.  
\item [ii)] If $\phi(1)\ge3$, \weakphiSLCone{} need not imply suppression quasi-greediness with constant $1$. This differs from \phiSLCone{}.
\item [iii)] If $\phi(1)\le 2$, then a \weakphiSLCone{} basis is suppression quasi-greedy with constant $1$.
\item [iv)] A basis is weak $\phi$-SLC with constant $C$ if and only if 
$$\|x\|\ \le\ C\|x-P_A(x) + 1_{\varepsilon B}\|,$$
for all $x\in X$ with $\|x\|_\infty\le 1$, for all signs $\varepsilon$, and for all $A, B\in \mathbb{N}^{<\infty}$ such that
\begin{enumerate}
\item $B$ is disjoint from $A\cup \supp(x)$, and
\item there exist $m\ge 1$ and $k\ge 0$ so that $|A| \le m - k$ and $|B| = \phi(m)-k$. 
\end{enumerate}
\item [v)] If a basis is \weakphiSLCone{} and satisfies $\|T_a(x)\|\le \|x\|$ for all truncation operators $T_a$, then the basis is \phiAGone{}. 
\item [vi)] If $\phi(1)\le 2$, then a basis is \weakphiSLCone{} if and only if it is \phiAGone{}. 
\end{enumerate}
\end{prop}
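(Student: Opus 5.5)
The six items are largely independent. I would prove (i) by a direct construction, (ii) from the counterexample already built for Theorem~\ref{M1}, (iii) and (iv) by imitating Lemma~\ref{implySQG1} and Proposition~\ref{ezcharSLC}, (v) by repeating the second half of the proof of Theorem~\ref{M1}, and (vi) by combining the others.

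\emph{Item (i).} Let $x, A, B, \varepsilon, \delta$ be as in Definition~\ref{defiweakSLC}, with $|A|\le m-k$ and $|B|=\phi(m)-k$. Choose a set $D$ with $|D|=k$, disjoint from $A\cup B\cup\supp(x)$. Put
$$y\ =\ x+1_{\varepsilon A}+1_{\delta B}+1_D .$$
Every coefficient of $y$ on $B\cup D$ has modulus $1$, and every other coefficient has modulus at most $1$. Hence $B\cup D\in\mathcal G(y,\phi(m))$. Taking $A\cup D$, which has cardinality at most $m$, as the competing set in the $\phi$-AG inequality gives
$$\|x+1_{\varepsilon A}\|\ =\ \|y-P_{B\cup D}(y)\|\ \le\ C\|y-P_{A\cup D}(y)\|\ =\ C\|x+1_{\delta B}\|.$$

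\emph{Item (ii).} The renorming of $\ell_1$ used for Theorem~\ref{M1} is \phiAGone{} whenever $\phi(1)\ge3$, but it is not $1$-suppression quasi-greedy. By (i) it is \weakphiSLCone{}, which gives (ii).

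\emph{Item (iii).} Take $m=1$ and $k=\phi(1)-1\in\{0,1\}$. Then $|B|=1$ is allowed, together with $|A|\le 2-\phi(1)$, so $A=\emptyset$ is admissible. This yields $\|x\|\le\|x+e_j\|$ whenever $\|x\|_\infty\le1$ and $j\notin\supp(x)$. Scaling, convexity and iteration along a greedy set then proceed exactly as in Lemma~\ref{implySQG1}, extending to infinitely supported $x$ by density as in Lemma~\ref{l1}.

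\emph{Item (iv).} This is the argument of Proposition~\ref{ezcharSLC} word for word; the only point to check is that the admissibility conditions depend only on $|A|$ and $|B|$. For the forward direction, $x-P_A(x)$ has support disjoint from $A$ and $B$, so convexity gives
$$\|x\|\ \le\ \sup_\delta\|x-P_A(x)+1_{\delta A}\|,$$
and the weak SLC inequality bounds the right-hand side. For the converse, apply the inequality to $y=x+1_{\varepsilon A}$.

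\emph{Item (v).} Let $B\in\mathcal G(x,\phi(m))$ and $|A|\le m$, and set $k=|A\cap B|$. Then
$$|A\setminus B|\ \le\ m-k \quad\text{and}\quad |B\setminus A|\ =\ \phi(m)-k,$$
which are exactly the weak-SLC cardinality conditions. If $\alpha=\min_{n\in B}|e_n^*(x)|=0$, then $x=P_B(x)$ and there is nothing to prove. Otherwise, apply (iv) with $C=1$ to
$$z\ =\ \alpha^{-1}\bigl(x-P_B(x)\bigr),$$
whose sup norm is at most $1$, using the pair $(A\setminus B,\ B\setminus A)$ and the signs of $x$ on $B\setminus A$. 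Rescaling gives
$$\|x-P_B(x)\|\ \le\ \|x-P_{A\cup B}(x)+\alpha1_{\varepsilon (B\setminus A)}\|\ =\ \|T_\alpha(x-P_A(x))\|\ \le\ \|x-P_A(x)\|,$$
where the last step is the hypothesis on truncations.

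\emph{Item (vi).} By (iii), a \weakphiSLCone{} basis is $1$-suppression quasi-greedy when $\phi(1)\le2$, so Lemma~\ref{BBGlem} with $C=1$ gives $\|T_a(x)\|\le\|x\|$. Then (v) yields \phiAGone{}, and (i) gives the converse.

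\emph{Main obstacle.} I expect the delicate step to be (v). One must verify that the set $\{n:|e_n^*(x-P_A(x))|>\alpha\}$ is contained in $B\setminus A$, so that the middle expression really equals $T_\alpha(x-P_A(x))$. Ties at level $\alpha$ need care here: coefficients equal to $\alpha$ on $B\setminus A$ are harmless, since replacing them by $\alpha$ times their sign changes nothing.
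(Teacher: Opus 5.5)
Your items (ii)--(vi) follow the paper's argument essentially verbatim. In (v), your check that $\Lambda_\alpha(x-P_A(x))\subset B\setminus A$, with ties at level $\alpha$ harmless, is exactly what makes the middle expression equal $T_\alpha(x-P_A(x))$.

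The gap is in item (i). The step ``choose a set $D$ with $|D|=k$, disjoint from $A\cup B\cup\supp(x)$'' cannot always be carried out. For a general $x\in X$, the complement of $\supp(x)\cup A\cup B$ may have fewer than $k$ elements. For instance, $x=\sum_{n\notin A\cup B}2^{-n}e_n$ converges absolutely (since $\sup_n\|e_n\|<\infty$) and satisfies $\|x\|_\infty\le 1/2$. By continuity of the $e_n^*$, it has $\supp(x)=\mathbb N\setminus(A\cup B)$. So for $k\ge1$ there is no room for $D$, and the auxiliary vector $y$ cannot be formed. Your (ii) and the implication \phiAGone{} $\Rightarrow$ \weakphiSLCone{} in (vi) both rest on (i) for arbitrary $x$, so the gap propagates there as well.

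The fix is the finite-support reduction that the paper performs before the construction:
\begin{itemize}
\item By Lemma~\ref{FiniteSupportReduction} with $F=A\cup B$, choose finitely supported $x_r$ with $\supp(x_r)\cap(A\cup B)=\emptyset$, $\|x_r-x\|\to0$ and $\|x_r-x\|_\infty\to0$.
\item Put $y_r=x_r/\max\{1,\|x_r\|_\infty\}$, so that $\|y_r\|_\infty\le1$ and still $y_r\to x$.
\item For each $r$ a suitable $D_r$ now exists, and your argument gives $\|y_r+1_{\varepsilon A}\|\le C\|y_r+1_{\delta B}\|$.
\item Letting $r\to\infty$ yields the inequality for $x$.
\end{itemize}
Incidentally, you invoked density in (iii), where it is not needed: the weak SLC inequality with $A=\emptyset$, $B=\{j\}$ applies directly to every $x$ with $\|x\|_\infty\le1$ and $j\notin\supp(x)$. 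You omitted it in (i), which is where it is actually needed.
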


\begin{proof}
We prove i). Suppose that our basis is $\phi$-AG with constant $C$.
By Lemma \ref{FiniteSupportReduction}, it suffices to verify the
inequality for finitely supported $x$: for a general $x$, we first
approximate it by finitely supported vectors avoiding $A\cup B$ and
rescale them by factors converging to $1$ to ensure the coefficient
bound $\|x\|_\infty\le 1$.
Choose $A, B, x, \varepsilon$, and $\delta$ as in Definition \ref{defiweakSLC}, with $x$ finitely supported. Let $y = x + 1_{\varepsilon A} + 1_{\delta B} + 1_D$, where $D\in \mathbb{N}^{<\infty}$ is disjoint from $\supp(x)\cup A\cup B$ and $|D| = k$. Since $|B\cup D| = \phi(m)$ and $|A\cup D|\le m$, the set
$B\cup D$ is greedy for $y$, and we have
$$\|x+1_{\varepsilon A}\|\ =\ \|y - P_{B\cup D}(y)\|\ \le\ C\|y-P_{A\cup D}(y)\|\ =\ \|x+1_{\delta B}\|.$$

Statement ii) follows from i) and the counterexample in the proof of Theorem~\ref{M1}.

We prove iii). Pick $x\in X$ with $\|x\|_\infty\le 1$ and $j\in\mathbb{N}\backslash \supp(x)$. If $\phi(1) = 2$, then we choose $(m,k) = (1,1)$ in Definition \ref{defiweakSLC}. If 
$\phi(1) = 1$, then we choose $(m,k) = (1,0)$ in Definition \ref{defiweakSLC}. In both cases, we have
$$\|x\|\ \le\ \|x+e_j\|.$$
By the same reasoning as in the proof of Lemma \ref{implySQG1}, the basis is suppression quasi-greedy with constant $1$.

The proof of iv) is similar to the proof of Proposition \ref{ezcharSLC}.

Next, we prove v). Let $x\in X$, $m\in \mathbb{N}$, $A\in \mathcal{G}(x, \phi(m))$, and $B\in \mathbb{N}^{<\infty}$ with $|B| \le m$. 
Let $\alpha = \min_{n\in A}|e_n^*(x)|$. If $\alpha=0$, then $x=P_A(x)$ by totality, and the conclusion is immediate. Otherwise, set $\varepsilon=(\sgn(e_i^*(x)))_{i\in A}$. Observe that 
$$\begin{cases}|B\backslash A| \ =\ |B| - |A\cap B|\ \le\ m - |A\cap B|,\\ |A\backslash B| \ =\ |A| - |A\cap B|\ =\ \phi(m) - |A\cap B|.\end{cases}$$
It follows from iv) and by $\|T_\alpha\|\le 1$ that
\begin{align*}
\|x-P_A(x)\|&\ \le\ \|x-P_A(x) - P_{B\backslash A}(x) + \alpha 1_{\varepsilon A\backslash B}\|\\
&\ =\ \|T_\alpha(x-P_B(x))\|\ \le\ \|x-P_B(x)\|.
\end{align*}
This concludes the proof of v).

Lastly, vi) follows from i), iii), v), and Lemma \ref{BBGlem}.
\end{proof}

\begin{cor}
Let $(e_n)_{n=1}^\infty$ be a basis. 
    If $\phi(1)\le 2$, then the following are equivalent:
    \begin{enumerate}
        \item[(a)] The basis $(e_n)_{n=1}^\infty$ is \phiSLCone{}.
        \item[(b)] The basis $(e_n)_{n=1}^\infty$ is \phiAGone{}. 
        \item[(c)] The basis $(e_n)_{n=1}^\infty$ is \weakphiSLCone{}. 
    \end{enumerate}
\end{cor}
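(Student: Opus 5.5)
The corollary follows by combining the two implications already established in this section and the previous one, under the standing hypothesis $\phi(1)\le 2$. The equivalence of (a) and (b) is exactly the case $\phi(1)\in\{1,2\}$ of Theorem~\ref{M1}, proved above in its direction ``(b) implies (a)''. Recall how that argument runs. Lemmas~\ref{implySQG1} and~\ref{l1} show that both \phiSLCone{} and \phiAGone{} force suppression quasi-greediness with constant $1$. Given this, \phiAGone{} yields \phiSLCone{} by the greedy-set comparison with $y=x+1_{\varepsilon A}+1_{\delta B}$. In the other direction, \phiSLCone{} yields \phiAGone{} through Proposition~\ref{ezcharSLC}, the cardinality condition \eqref{e20}, and Lemma~\ref{BBGlem} applied to the truncation $T_\alpha$.

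The equivalence of (b) and (c) is item vi) of Proposition~\ref{weakSLC_properties}, and I would spell out its three ingredients.
\begin{enumerate}
\item[(1)] By item i) with $C=1$, a \phiAGone{} basis is \weakphiSLCone{}; no assumption on $\phi(1)$ is needed here.
\item[(2)] For the converse, item iii) uses $\phi(1)\le 2$ to choose $(m,k)=(1,1)$ or $(1,0)$ in Definition~\ref{defiweakSLC}. This gives $\|x\|\le\|x+e_j\|$, hence suppression quasi-greediness with constant $1$.
\item[(3)] Lemma~\ref{BBGlem} with $C=1$ then gives $\|T_a(x)\|\le\|x\|$ for every truncation operator. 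Item v) now upgrades \weakphiSLCone{} to \phiAGone{}.
\end{enumerate}

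Chaining these, (a)$\Leftrightarrow$(b)$\Leftrightarrow$(c). There is no genuine obstacle, since all the work was done earlier. The only point needing care is that the hypothesis $\phi(1)\le 2$ enters precisely in deriving suppression quasi-greediness with constant $1$. It is used in Lemma~\ref{l1} for (b)$\Rightarrow$(a) and in item iii) for (c)$\Rightarrow$(b). The counterexample in the proof of Theorem~\ref{M1} shows this step, and hence the corollary, fails once $\phi(1)\ge 3$.
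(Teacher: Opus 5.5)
Your proposal is correct and is the paper's own proof: (a)$\Leftrightarrow$(b) is Theorem~\ref{M1} in the case $\phi(1)\le 2$, and (b)$\Leftrightarrow$(c) is item vi) of Proposition~\ref{weakSLC_properties}. Your breakdown of the ingredients is accurate, including that $\phi(1)\le 2$ is used only to obtain suppression quasi-greediness with constant $1$.
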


\begin{proof}
    Due to Theorem \ref{M1}, (a) is equivalent to (b). By Proposition \ref{weakSLC_properties}, item (vi), we have (b) is equivalent to (c). 
\end{proof}

\begin{thm}\label{weakNotAGExample}
    If $\phi\in \Phi$ and $\phi(1)\ge 3$, then there exists a normalized basis that is equivalent to the canonical basis of $\ell_1$, \weakphiSLCone{}, but not \phiAGone{}. 
\end{thm}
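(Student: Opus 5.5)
The plan is to build, as in the counterexample for Theorem~\ref{M1}, an explicit renorming of $\ell_1$ on $c_{00}$ for which the canonical basis $\mathcal B=(e_n)$ is normalized. The norm has the form
$$\|x\|\ =\ \max\Bigl\{\,\Bigl|\sum_{i\in F}x_i\Bigr|,\ \frac1c\sum_{i=1}^\infty |x_i|\,\Bigr\}\qquad\text{(rescaled to make $\|e_n\|=1$)},$$
where $F$ is a fixed finite block of coordinates and $c>0$ is a constant, both to be tuned in terms of $p:=\phi(1)\ge 3$. Equivalence with the $\ell_1$ basis is automatic, since the $\ell_1$ term is dominated by $\|x\|$ and dominates $\|x\|$ up to a constant.

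The structural observation driving the construction is the following. By Lemma~\ref{lemmaequiv1} and Proposition~\ref{propPhiEquivalent}(iii), every pair $(A,B)$ admissible in Definition~\ref{defiweakSLC_2} satisfies
$$|B|-|A|\ \ge\ \phi(m)-m\ \ge\ \phi(1)-1\ \ge\ 2.$$
Moreover, \weakphiSLCone{} only compares vectors whose perturbations $1_{\varepsilon A}$ and $1_{\delta B}$ are flat, with modulus exactly $1$, over a background satisfying $\|x\|_\infty\le1$.

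\phiAGone{} is a different kind of condition. By Proposition~\ref{weakSLC_properties}(v) and the discussion around Theorem~\ref{MainWeightedCharacterization}, it also requires robustness when the removed greedy coefficients have modulus $1$ but the surviving coefficients on the ``interacting'' block $F$ are close to $1$ and add coherently. So the failure of \phiAGone{} will be witnessed with $m=1$ by a vector
$$x\ =\ t\,1_F+1_D,\qquad |D|=p,\quad t<1,$$
chosen so that removing the greedy set $D$ leaves $t1_F$. This has norm about $t|F|$, which should exceed the norm of $x-P_{\{j\}}(x)$ for some $j\in F$. The latter is roughly $\max\{t(|F|-1),\,(t(|F|-1)+p)/c\}$. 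We therefore need $|F|$ large and $c$ large relative to $p$, while $t$ stays close to $1$.

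The weak SLC verification then becomes a case analysis on how $A$ and $B$ meet $F$, mimicking Cases 1–3 in the proof of Theorem~\ref{M1}. If $F\cap(A\cup B)=\emptyset$, the $F$-term is identical on both sides, and the $\ell_1$ term increases, because $|B|\ge|A|$. If $A$ meets $F$, the $F$-term of $x+1_{\varepsilon A}$ exceeds that of $x$ by at most $|A\cap F|$. This increase must be absorbed either by the $F$-term of $x+1_{\delta B}$ (when $B$ also meets $F$, using suitable signs) or by the $\ell_1$ term, using the surplus of at least $\phi(1)-1$ unit coefficients in $B$. If $A$ is disjoint from $F$ and $B$ meets $F$, the $F$-term on the right can decrease. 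This is controlled because all coefficients of $x$ on $F$ have modulus at most $1$, and each index of $B$ inside $F$ moves that term by at most $1$.

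The main obstacle is the tension in choosing $(|F|,c)$. A large $c$ makes the $\ell_1$ term too weak to absorb the growth of the $F$-term when $A\cap F\neq\emptyset$ and $B\cap F=\emptyset$, which is exactly the configuration that threatens weak SLC. My first attempt will be to abandon a single linear functional and instead let the first term be $\max_{G\subset F}|\sum_{i\in G}x_i|/\lambda_{|G|}$ for a concave weight $\lambda$. The idea is that flat vectors, whose $F$-coordinates lie in $\{0\}\cup\{|z|\le1\}$ with the coordinates of $A$ at modulus exactly $1$, are compared via the cardinality surplus. Meanwhile the near-$1$ coordinates $t$ in the failure vector are still penalized relatively more in the greedy remainder than in $x-P_{\{j\}}(x)$. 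If this tuning cannot be made to work uniformly in $p$, I would reduce to the case $p=3$ by noting that a larger $p$ only strengthens the cardinality surplus on the weak SLC side. The failure inequality then has to be re-checked for each $p$ by scaling $|F|$ with $p$.
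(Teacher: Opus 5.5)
There is a genuine gap, and it lies in your choice of witness, not in the tuning of $(|F|,c)$. The vector $x=t\,1_F+1_D$ with $0<t<1$ and $|D|=\phi(1)$ can never violate \phiAGone{} for a basis that is \weakphiSLCone{}. Fix $j\in F$ and put $x'=t\,1_{F\setminus\{j\}}$. Definition~\ref{defiweakSLC} with $(m,k)=(1,0)$, $A=\{j\}$, $B=D$ gives $\|x'\pm e_j\|\le\|x'+1_D\|$. Since $x'+te_j$ is a convex combination of $x'\pm e_j$, it follows that $\|x-P_D(x)\|=\|t1_F\|\le\|t1_{F\setminus\{j\}}+1_D\|=\|x-P_{\{j\}}(x)\|$.

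More generally, run the last step of the proof of Theorem~\ref{MainWeightedCharacterization} with $|a_n|=1$. It shows that \weakphiSLCone{} already yields $\|v-P_G(v)\|\le\|v-P_F(v)\|$ whenever every coefficient of $v$ on $G\setminus F$ has modulus exactly $\alpha=\min_{n\in G}|e_n^*(v)|$. So the incompatibility you ran into is forced for every norm. Your heuristic is also backwards: coherent background coefficients of modulus at most $1$ against flat greedy coefficients are precisely what weak SLC controls. Independently of this, the proposal never fixes a norm or carries out the case analysis, so nothing is actually verified.

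The missing idea comes from Proposition~\ref{weakSLC_properties}, item v), together with Lemma~\ref{BBGlem}. A \weakphiSLCone{} basis whose truncations satisfy $\|T_a(x)\|\le\|x\|$ is automatically \phiAGone{}; in particular this holds for any $1$-suppression quasi-greedy basis. So the example must break truncation. The witness needs a greedy coefficient of modulus strictly larger than $\alpha$ that survives in $x-P_F(x)$, and the norm must \emph{decrease} when that coefficient is enlarged.

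This also rules out your fallback. For any nondecreasing weight $\lambda$, deleting a coordinate $j$ replaces $G$ by $G\setminus\{j\}$, which carries a smaller weight. Hence $\max_{G\subset F}|\sum_{i\in G}x_i|/\lambda_{|G|}$, maxed with an $\ell_1$ term, is $1$-suppression unconditional, and such a basis is \phiAGone{} as soon as it is \weakphiSLCone{}.

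The paper builds the needed cancellation in explicitly. Let $N=\phi(1)$ and $I=\{1,\dots,N\}$. The norm is $\sum_{n\notin I}|x_n|+\max\{\frac{1}{3}\max_{n\in I}|x_n|,\ \sup_{a\in\mathcal A}|\sum_{n\in I}a_nx_n|\}$, where $\mathcal A$ is the set of $a\in[-1,1]^N$ with $|\supp(a)|\le 3$ and $|\frac{a_1}{2}+\sum_{n\in I}a_n|\le\frac{1}{2}$.

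The witness is $x=\frac{e_1}{2}+1_I+e_{N+1}$, with greedy set $\{N+1\}\cup(I\setminus\{2\})$ and competitor $\{N+1\}$. The competitor's residual $\frac{3}{2}e_1+1_{I\setminus\{1\}}$ has norm $\frac{1}{2}$, whereas the greedy residual is $e_2$ with $\|e_2\|=1$. The support bound $|\supp(a)|\le 3$ is what lets the surplus $|B|-|A|\ge N-1\ge 2$ absorb the $I$-part in the weak SLC verification. That verification is done by cases on $|B\setminus I|-|A\setminus I|$.
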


\begin{proof}
Let $N:=\phi(1)$ and $I:=\{1,\dots, N\}$, and define 
$$
\mathcal{A}\ :=\ \left\lbrace  a=(a_n)_{n\in I}\subset \mathbb{R}\,:\, |a_n|\le 1, \forall n\in I, |\supp(a)|\le 3,  \left\vert \frac{a_1}{2}+\sum_{n\in I}a_n \right\vert \le \frac{1}{2}\right\rbrace.
$$
Notice that $\mathcal{A}=-\mathcal{A}$ and 
\begin{align}
&(a_1,\dots, a_N)  \in \mathcal{A}\ \Longleftrightarrow\ (a_1, a_{\pi(2)}, \dots, a_{\pi(N)})\in \mathcal{A},\nonumber\\
&\forall \mbox{ bijections } \pi: \{2,\dots, N\}\rightarrow \{2,\dots, N\}. \label{symmetric} 
\end{align}
Furthermore,
\begin{align}
&(-1,a_2,\dots, a_N) \in\mathcal{A}\nonumber\\
&\forall (a_2,\dots, a_N)\mbox{ with }a_j\in \{-1,0,1\}, |\{j: a_j\not=0\}|\le 2, \sum_{2\le n\le N}a_n \ge 1. \label{A1} 
\end{align}
(We can substitute $-1$ for $1$ and  $\sum_{2\le n\le N}a_n \le -1$ for $ \sum_{2\le n\le N}a_n \ge 1$ above.) \\
Define on $\mathtt{c}_{00}$ over $\mathbb{R}$ the following seminorms: for $x=(x_n)_{n\in \mathbb{N}}$, 
\begin{align*}
\|x\|_{\triangle}&\ :=\ \sum_{n\not\in I}|x_n|,\\
\|x\|_{\square}&\ :=\ \frac{1}{3}\max_{n\in I}|x_n|,\mbox{ and}\\
\|x\|_{\circ}&\ :=\ \sup_{a\in \mathcal{A}}\left\vert \sum_{n\in I} a_nx_n\right\vert.
\end{align*}
Let $X$ be the completion of $\mathtt{c}_{00}$ with the norm 
$$
\|x\|\ :=\ \|x\|_{\triangle}+\max\{\|x\|_{\square}, \|x\|_{\circ}\}.
$$
Let $(e_n)_{n=1}^{\infty}$ be the canonical vector system of $X$, and $(e_n^*)_{n=1}^{\infty}$ the canonical biorthogonal functionals. Note that $(e_n)_{n=1}^{\infty}$ is normalized, equivalent to the canonical unit vector basis of $\ell_1$, and by \eqref{A1}, 
\begin{equation}
\|1_{\delta D}\|_{\circ}\ \ge\ 1, \mbox{ for all nonempty sets } D\subset I\mbox{ and signs }\delta. \label{A2}
\end{equation}
Indeed, 
\begin{itemize}
    \item if $1\notin D$, then suppose that $j\in D$. To witness \eqref{A2}, we use 
    $$a \ =\ (-1, 0, \ldots, 0, 1, 0, \ldots, 0)\in\mathbb{R}^N\mbox{ with }1\mbox{ in the }j\mbox{th position};$$
    \item if $1\in D$, and $\{2, \ldots, N\}\not\subset D$, then suppose that $j\notin D$. To witness \eqref{A2}, we again use 
    $$a \ =\ (-1, 0, \ldots, 0, 1, 0, \ldots, 0)\in\mathbb{R}^N\mbox{ with }1\mbox{ in the }j\mbox{th position};$$
    \item if $D = \{1, 2, \ldots, N\}$, then without loss of generality, assume $(1_{\delta D})_1 = 1$.  We use 
    $$a\ =\ (-1, 1, 1, 0,\ldots, 0).$$
\end{itemize}

Recall that $\phi(m)-m$ is a nondecreasing function. Hence, due to Lemma~\ref{lemmaequiv1}, to prove that $(e_n)_{n=1}^{\infty}$ is \weakphiSLCone{}, it suffices to show that 
\begin{equation}\label{e2}
\|x + 1_{\varepsilon A}\|\ \le\ \|x + 1_{\delta B}\|, 
\end{equation}
for all $m\in \mathbb{N}$, all $\varepsilon, \delta$, and all $A, B$ finite subsets of $\mathbb{N}$ with 
\begin{itemize}
    \item $\|x\|_\infty\le 1$,
    \item $\supp(x)$, $A$, and $B$ are pairwise disjoint, and 
    \item $|B| - |A|\ge N-1$. 
\end{itemize}
To show this, fix $x, A, B, \varepsilon, \delta$ as above, and let 
\begin{align*}
&B_0\ :=\ B\cap I,\quad B_1\ :=\ B\backslash I;\\
&A_0\ :=\ A\cap I,\quad A_1\ :=\ A\backslash I.
\end{align*}
It follows at once from definitions that 
\begin{equation}
\|x+1_{\varepsilon A}\|_{\triangle}\ =\ \|x\|_{\triangle}+|A_1|\quad\text{ and }\quad \|x+1_{\delta B}\|_{\triangle}\ =\ \|x\|_{\triangle}+|B_1| \label{estimatesell1}
\end{equation}
and 
\begin{equation}
\|x+1_{\varepsilon A}\|_{\circ}\ \le\  3. \label{upperboundforfirstN}
\end{equation}
Now we proceed by case analysis; we will omit the estimates for $\|x+1_{\varepsilon A}\|_{\square}$, as they are clear. 

\bigskip

Case 1: If $|B_1|\ge |A_1|+3$, then \eqref{e2} follows from \eqref{estimatesell1} and \eqref{upperboundforfirstN}. 

\bigskip

Case 2: If $|B_1|=|A_1|+2$, then $|B_0|\ge |A_0|+N-3$. Then since $|A_0| + |B_0| \le N$, we have
$$|A_0|\ \le\ 1,$$ 
and since $|B_0| +|I\cap (\supp(x+1_{\varepsilon  A}))| \le N$, 
$$\quad |I\cap (\supp(x+1_{\varepsilon  A}))|\ \le\ 3.$$ We consider two subcases.
\begin{itemize}
\item If $\|x+1_{\varepsilon A}\|_{\circ}\le 2$, using \eqref{estimatesell1}, we obtain \eqref{e2}.
\item If $2<\|x+1_{\varepsilon A}\|_{\circ}\le 3$,  then $|I\cap (\supp(x+1_{\varepsilon  A}))|= 3$. Given that 
$$|B_0|\ \ge\ N-3+|A_0|\ \ge\ N-3,$$ 
we know that $|B_0|=N-3$ and $A_0=\emptyset$. Hence, 
$$\|x\|_{\circ}\ =\ \|x+1_{\varepsilon A}\|_{\circ}\ >\ 2.$$ 
Choose $a\in \mathcal{A}$ so that 
$$
2\ <\ \|x\|_{\circ}\ =\ \sum_{n\in  \supp(a)\cap \supp(x) }a_ne_n^*(x).  
$$
As $|\supp(x)\cap I|=3$ and $|\supp(a)|\le 3$, it follows that $\supp(x)\cap I=\supp(a)$. Hence, 
\begin{align*}
\|x+1_{\delta B} \|_{\circ}&\ \ge\ \sum_{n\in  \supp(a) }a_ne_n^*(x+1_{\delta B})\\
&\ =\  \sum_{n\in \supp(a) }a_ne_n^*(x)\ =\ \|x\|_{\circ}\ =\ \|x+1_{\varepsilon A}\|_{\circ}.
\end{align*}
\end{itemize}

\bigskip

Case 3: If $|B_1|=|A_1|+1$, then $|B_0|\ge |A_0|+N-2$. 
Hence, 
$$|I\cap\supp(x)| + |A_0| \ \le\ N - |B_0|\ \le\ N - (|A_0| + N-2)\ =\ 2 - |A_0|\ \le\ 2.$$
It follows that $\|x+1_{\varepsilon A}\|_{\circ}\le 2$. We consider two subcases. 
\begin{itemize}
\item If $\|x+1_{\varepsilon A}\|_{\circ}\le 1$, then using \eqref{estimatesell1}, we obtain \eqref{e2}. 
\item If $1<\|x+1_{\varepsilon A}\|_{\circ}\le 2$, then $|I\cap (\supp(x+1_{\varepsilon A}))|=2$, and $|B_0|=N-2$. Hence, $A_0=\emptyset$. Choose $a\in \mathcal{A}$ so that 
$$
1\ <\ \|x\|_{\circ}\ =\ \sum_{n\in  \supp(a)\cap \supp(x) }a_ne_n^*(x).  
$$
As $|\supp(a)|\le 3$, it follows that $|\supp(a)\cap B|\le 1$. Hence, 
$$
\|x+1_{\delta B} \|_{\circ}\ \ge\ \sum_{n\in  \supp(a)}a_ne_n^*(x+1_{\delta B})\ \ge\  \|x\|_{\circ}-1\ =\ \|x+1_{\varepsilon A}\|_{\circ}-1. 
$$
From this and \eqref{estimatesell1}, we derive \eqref{e2}. 
\end{itemize}

\bigskip

Case 4: If $|B_1|=|A_1|$, then $|B_0|\ge |A_0|+N-1$. Hence, $A_0=\emptyset$, and $|I\backslash B_0|\le 1$. We consider three subcases. 
\begin{itemize}
\item If $\supp(x)\cap I=\emptyset$, then $\|x+1_{\varepsilon A}\|_{\circ}=0$, and \eqref{e2} follows from \eqref{estimatesell1}. 
\item If $\supp(x)\cap I=\{1\}$, there is $0<|t|\le 1$ such that 
$$
P_I(x)\ =\ P_I(x+1_{\varepsilon A})\ =\ te_1\quad\text{ and }\quad P_I(x+1_{\delta B})\ =\ t e_1+\sum_{n=2}^{N}\delta_n e_n.
$$
Hence, $\|x+1_{\varepsilon A}\|_{\circ}=|t|$, whereas using \eqref{A1}, we get 
$$
\|x+1_{\delta B}\|_{\circ}\ \ge\ |-t+\delta_2+\delta_3|\ \ge\ |t|.  
$$
Thus, an application of \eqref{estimatesell1} gives \eqref{e2}. 
\item If there is $2\le k\le N$ such that $\supp(x)\cap I=\{k\}$, there is $0<|t|\le 1$ such that 
$$
P_I(x)\ =\ P_I(x+1_{\varepsilon A})\ =\ te_k\quad \text{ and }\quad P_I(x+1_{\delta B})\ =\ t e_k+\sum_{\substack{1\le n\le N\\n\not=k}}\delta_n e_n.
$$
Hence, $\|x+1_{\varepsilon A}\|_{\circ}=|t|$ and, choosing $j\in I\backslash \{1,k\}$ and using \eqref{A1}, we get 
$$
\|x+1_{\delta B}\|_{\circ}\ \ge\ |-\delta_1+t+\delta_j|\ \ge\ |t|.  
$$
Thus, an application of \eqref{estimatesell1} gives \eqref{e2}. 
\end{itemize}

\bigskip

Case 5: If $|B_1|=|A_1|-1$, then 
$$|B_0| \ =\  |B|-|B_1|\ \ge\ (|A| + N-1) - (|A_1|-1)\ =\ |A_0| +N\ \ge\ N.$$
Hence, $I = B_0\subset B$. By \eqref{A2}, 
$$
\|x+1_{\varepsilon A}\|_{\circ}\ =\ 0 \quad \text{ and }\quad \|x+1_{\delta B}\|_{\circ}\ =\ \|1_{\delta I}\|_{\circ}\ \ge\ 1, 
$$
which, when combined with \eqref{estimatesell1}, gives \eqref{e2}.

As we have considered all possible cases, we conclude that \eqref{e2} holds.

To see that \eqref{e1} does not hold, let 
$$x\ :=\ \frac{e_1}{2}+1_{I}+e_{N+1}, \quad B\ :=\ \{N+1\}\cup I\backslash \{2\}, \quad \mbox{ and }\quad A\ :=\ \{N+1\}.$$
Then $B\in \mathcal{G}(x,N)$ and $|A|= 1$, but 
$$
\|x-P_A(x)\|\ =\ \left\|\frac{e_1}{2}+1_{I}\right\|\ \le\ \frac{1}{2},
$$
whereas 
\begin{align*}
\|x-P_B(x)\|\ =\ \|e_2\|\ =\ 1.
\end{align*}
\end{proof}

\begin{proof}[Proof of Theorem~\ref{M2}]
The implication \phiAGone{} $\Longrightarrow$ \weakphiSLCone{} is
Proposition~\ref{weakSLC_properties}, item i). If $\phi(1)\le2$, the reverse
implication follows from Proposition~\ref{weakSLC_properties}, item vi).
If $\phi(1)\ge3$, Theorem~\ref{weakNotAGExample} supplies a
normalized Schauder basis that is equivalent to the canonical basis of $\ell_1$, $\weakphiSLCone$, but not
\phiAGone. 
\end{proof}

\section{Weighted weak symmetry and $\phi$-almost greediness}
\label{section_weighted_weak}

By Theorem~\ref{M2}, weak $\phi$-SLC need not imply
\phiAGone{}. We now strengthen weak symmetry by allowing
coefficients of arbitrary modulus at least $1$ on the right-hand
side, while retaining the same cardinality restrictions.

\begin{defi}\label{defWeightedWeakSLC}\normalfont
A basis $\mathcal B$ satisfies \emph{\weightedweakphiSLCone} if
\begin{equation}\label{weightedWeakSLC}
 \left\|x+1_{\varepsilon A}\right\|
 \ \le\ \left\|x+\sum_{n\in B}a_ne_n\right\|
\end{equation}
for every $x\in X$ with $\|x\|_\infty\le1$, finite sets
$A,B\subset\mathbb N$ such that $A,B,\supp(x)$ are pairwise
disjoint, $|a_n|\ge 1$ for each $n\in B$, and for some $m\ge1$,
\begin{equation}\label{weightedWeakSizes}
 |B|-|A|\ \ge\ \phi(m)-m,\qquad |B|\ \le\ \phi(m),
\end{equation}
and every choice of signs $\varepsilon$ on $A$ and scalars
$(a_n)_{n\in B}$ as above.
\end{defi}

Taking $|a_n|=1$ in \eqref{weightedWeakSLC} recovers
\weakphiSLCone{}. We shall prove that the weighted condition is
precisely what is needed to characterize \phiAGone{}.
There is an equivalent homogeneous formulation.

\begin{defi}\label{defFphi}\normalfont
A basis $\mathcal B$ satisfies \emph{Property \Fphione} if
\begin{equation}\label{Fphi_inequality}
 \|x\|\ \le\ \|x-P_A(x)+u\|
\end{equation}
whenever $x,u\in X$, $A\subset\mathbb N$ is finite,
$B=\supp(u)$ is nonempty, finite, and disjoint from $A\cup\supp(x)$,
$A,B$ satisfy \eqref{weightedWeakSizes} for some $m\ge1$, and
\begin{equation}\label{Fphi_coefficients}
 \min_{n\in B}|e_n^*(u)|\ \ge\ \|x\|_\infty.
\end{equation}
\end{defi}

\begin{rek}\normalfont
Property $(F_\phi)_1$ gives a homogeneous form of weighted weak
$\phi$-SLC and is analogous to the norm comparison in Proposition~\ref{ezcharSLC} and 
Proposition~\ref{weakSLC_properties}, item iv). It is related in spirit
to functional characterizations of almost greediness, including
Property $(F^*)$ in \cite{BM}. 
\end{rek}

\begin{prop}\label{FphiWeightedEquivalence}
For every $\phi\in\Phi$, a semi-normalized Markushevich basis
satisfies \weightedweakphiSLCone{} if and only if it satisfies
\Fphione.
\end{prop}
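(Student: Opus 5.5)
The plan is to imitate the proof of Proposition~\ref{ezcharSLC}. One direction is a direct substitution. The other uses homogeneity and convexity to replace the unimodular block on $A$ by $P_A(x)$.

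\emph{(F$_\phi$)$_1$ implies weighted weak $\phi$-SLC$_1$.} Fix $x$ with $\|x\|_\infty\le1$, sets $A,B$ satisfying \eqref{weightedWeakSizes}, signs $\varepsilon$ on $A$, and scalars $|a_n|\ge1$ on $B$. If $B=\emptyset$, then \eqref{weightedWeakSizes} forces $\phi(m)=m$ and $A=\emptyset$, so there is nothing to prove; assume $B\neq\emptyset$. Put $y=x+1_{\varepsilon A}$ and $u=\sum_{n\in B}a_ne_n$. Then:
\begin{itemize}
\item $\supp(u)=B$ is disjoint from $A\cup\supp(y)$;
\item $\|y\|_\infty\le1\le\min_{n\in B}|a_n|$;
\item $y-P_A(y)=x$.
\end{itemize}
Applying \eqref{Fphi_inequality} to $y$, $A$, $u$ gives $\|x+1_{\varepsilon A}\|=\|y\|\le\|x+u\|$, which is \eqref{weightedWeakSLC}.

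\emph{Weighted weak $\phi$-SLC$_1$ implies (F$_\phi$)$_1$.} Let $x,u,A,B$ be as in Definition~\ref{defFphi}, and set $t=\|x\|_\infty$. If $t=0$, then totality of $(e_n^*)$ gives $x=0$ and the claim is trivial; assume $t>0$. By homogeneity, replacing $x,u$ by $x/t,u/t$, we may assume $\|x\|_\infty=1$ and $|e_n^*(u)|\ge1$ on $B$. Write $z=x-P_A(x)$. Then $\|z\|_\infty\le1$, and $\supp(z)$, $A$, $B$ are pairwise disjoint.

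The key step is convexity. The vector $x=z+\sum_{n\in A}e_n^*(x)e_n$ has $|e_n^*(x)|\le1$ on $A$, so it is a convex combination of the vectors $z+1_{\varepsilon A}$ over signs $\varepsilon$. In the complex case, write each coefficient of modulus at most one as an average of two unimodular scalars and take products over $A$. Hence
\[
\|x\|\le\sup_\varepsilon\|z+1_{\varepsilon A}\|\le\|z+u\|=\|x-P_A(x)+u\|,
\]
where the second inequality is \eqref{weightedWeakSLC} applied to $z$, $A$, $B$ with $a_n=e_n^*(u)$.

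\emph{Main obstacle.} The delicate point is that both definitions allow infinitely supported $x$. This is harmless here, since neither direction needs approximation: all the manipulations take place on the finite sets $A$ and $B$. Note, however, that \eqref{weightedWeakSLC} is applied to $z$, which need not be finitely supported; this is allowed since Definition~\ref{defWeightedWeakSLC} quantifies over all $x\in X$ with $\|x\|_\infty\le1$. The convex-combination step should be written out carefully over $\mathbb C$, but it is the same argument as in Proposition~\ref{ezcharSLC}.
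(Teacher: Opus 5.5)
Your proof is correct and follows essentially the same route as the paper: direct substitution $y=x+1_{\varepsilon A}$, $u=\sum_{n\in B}a_ne_n$ for one direction, and convexity on $A$ plus rescaling and one application of \eqref{weightedWeakSLC} for the other. The differences are cosmetic. You normalize by $\|x\|_\infty$ instead of by $\alpha=\min_{n\in B}|e_n^*(u)|$, and you explicitly dispose of the case $B=\emptyset$ (where \eqref{weightedWeakSizes} forces $A=\emptyset$), which the paper passes over even though Property \Fphione{} only covers nonempty $B$.
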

\begin{proof}
Suppose first that \Fphione{} holds. Let
$x,A,B,\varepsilon,(a_n)_{n\in B}$ be admissible for
\eqref{weightedWeakSLC} and put
\[
 z\ =\ x+1_{\varepsilon A},\qquad u\ =\ \sum_{n\in B}a_ne_n.
\]
Then $\|z\|_\infty\le1$,
$P_A(z)=1_{\varepsilon A}$,
and $\min_{n\in B}|a_n|\ge1$ when $B\ne\emptyset$.
Thus, applying \eqref{Fphi_inequality} to $z,u,A$ gives
\[
 \|x+1_{\varepsilon A}\|\ =\ \|z\|
 \ \le\ \|z-P_A(z)+u\|
 \ =\ \left\|x+\sum_{n\in B}a_ne_n\right\|.
\]

Conversely, assume that the weighted weak $\phi$-SLC inequality
holds. Fix $x,u,A,B$ as in Definition~\ref{defFphi}. Set
\[
 \alpha\ =\ \min_{n\in B}|e_n^*(u)|\ >\ 0,
 \qquad y\ =\ x-P_A(x).
\]
We have $\|y\|_\infty\le\|x\|_\infty\le\alpha$.
Since $A$ is finite and each coefficient of $P_A(x)$ has modulus
at most $\alpha$, convexity of the norm gives
\begin{equation}\label{Fphi_convexity}
 \|x\|\ =\ \|y+P_A(x)\|
 \ \le\ \sup_{\varepsilon}\|y+\alpha1_{\varepsilon A}\|.
\end{equation}
The pair $A,B$ has exactly the cardinality restrictions of
\eqref{weightedWeakSLC}. For every sign $\varepsilon$, apply
\eqref{weightedWeakSLC} to $\alpha^{-1}y$ and the coefficients
$a_n=\alpha^{-1}e_n^*(u)$, $n\in B$. Multiplying by $\alpha$ gives
\[
 \|y+\alpha1_{\varepsilon A}\|\ \le\ \|y+u\|.
\]
Together with \eqref{Fphi_convexity}, this yields
\eqref{Fphi_inequality}.
\end{proof}

\begin{proof}[Proof of Theorem~\ref{MainWeightedCharacterization}]
By Proposition~\ref{FphiWeightedEquivalence}, it remains to show
that \phiAGone{} is equivalent to \weightedweakphiSLCone{}.

Assume first \phiAGone{}. Fix $x,A,B,\varepsilon$ and
$(a_n)_{n\in B}$ as in Definition~\ref{defWeightedWeakSLC}.
We begin with $x$ finitely supported. By the proof of  Lemma~\ref{lemmaequiv1}, there are integers $m\ge1$ and $k\ge0$
such that
\[
 |A|\ \le\ m-k,\qquad |B|\ =\ \phi(m)-k.
\]
Choose $D\subset\mathbb N$ disjoint from
$\supp(x)\cup A\cup B$ with $|D|=k$, and set
\[
 v\ =\ x+1_{\varepsilon A}+\sum_{n\in B}a_ne_n+1_D.
\]
Since $\|x\|_\infty\le1$ and $|a_n|\ge1$, the set
$B\cup D$ is greedy for $v$ of order $\phi(m)$. Moreover,
$|A\cup D|\le m$. Hence
\begin{align*}
 \|x+1_{\varepsilon A}\|
 &\ =\ \|v-P_{B\cup D}(v)\|\\
 &\ \le\ \|v-P_{A\cup D}(v)\|\\
 &\ =\ \left\|x+\sum_{n\in B}a_ne_n\right\|,
\end{align*}
which is \eqref{weightedWeakSLC}.

For general $x$, Lemma~\ref{FiniteSupportReduction} gives
finitely supported $x_r\to x$ whose supports avoid $A\cup B$ and
such that $\|x_r-x\|_\infty\to0$. Set
\[
 \eta_r\ =\ \max\{1,\|x_r\|_\infty\},\qquad
 y_r\ =\ \eta_r^{-1}x_r.
\]
Then $\|y_r\|_\infty\le1$ and $y_r\to x$, because
$\eta_r\to1$. Applying the finite-support argument to $y_r$ and
passing to the limit proves \eqref{weightedWeakSLC} for $x$.

Conversely, assume \weightedweakphiSLCone{}.
Let $v\in X$, $m\ge1$, $G\in\mathcal G(v,\phi(m))$, and
$F\subset\mathbb N$ with $|F|\le m$. We must show
\[
 \|v-P_G(v)\|\ \le\ \|v-P_F(v)\|.
\]
Put $\alpha=\min_{n\in G}|e_n^*(v)|$.
If $\alpha=0$, then $v=P_G(v)$ by totality of the
coordinate functionals, so the inequality is immediate.
If $G\setminus F=\emptyset$, then $G=F$ and there is again
nothing to prove. Otherwise, set
\[
 k\ =\ |G\cap F|,\qquad A\ =\ F\setminus G,\qquad
 B\ =\ G\setminus F,\qquad z\ =\ v-P_{F\cup G}(v).
\]
We have $|A|\le m-k$, $|B|=\phi(m)-k$,
$A,B,\supp(z)$ are pairwise disjoint, and
\[
 \|z\|_\infty\ \le\ \alpha,
 \qquad |e_n^*(v)|\ \le\ \alpha\quad(n\in A).
\]
By convexity and \eqref{weightedWeakSLC}, applied to
$\alpha^{-1}z$ with coefficients
$a_n=\alpha^{-1}e_n^*(v)$ on $B$, we obtain
\begin{align*}
 \|v-P_G(v)\|
 &\ =\ \|z+P_A(v)\|\\
 &\ \le\ \sup_{\varepsilon}\|z+\alpha1_{\varepsilon A}\|\\
 &\ \le\ \|z+P_B(v)\|\\
 &\ =\ \|v-P_F(v)\|.
\end{align*}
This proves \phiAGone{} and completes the proof.
\end{proof}

\begin{cor}\label{strict_weighted_weak}
For every $\phi\in\Phi$, \weightedweakphiSLCone{} implies
\weakphiSLCone{}. These two properties are equivalent for all
semi-normalized Markushevich bases if and only if
$\phi(1)\in\{1,2\}$. If $\phi(1)\ge3$, the implication is
strict, even among Schauder bases equivalent to the canonical
basis of $\ell_1$.
\end{cor}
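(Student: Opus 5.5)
The corollary follows by combining results already proved, so the plan is bookkeeping rather than new estimates.

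\emph{The implication.} Taking $a_n=\delta_n$ with $|\delta_n|=1$ in Definition~\ref{defWeightedWeakSLC} gives exactly the inequality in Definition~\ref{defiweakSLC_2}, with the same cardinality constraints \eqref{weightedWeakSizes}. By Lemma~\ref{lemmaequiv1}, Definition~\ref{defiweakSLC_2} is equivalent to Definition~\ref{defiweakSLC}. Hence \weightedweakphiSLCone{} implies \weakphiSLCone{} for every $\phi\in\Phi$. Alternatively, one can route it through Theorem~\ref{MainWeightedCharacterization} and Proposition~\ref{weakSLC_properties}, item i): weighted weak $\Rightarrow$ \phiAGone{} $\Rightarrow$ weak.

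\emph{Equivalence when $\phi(1)\in\{1,2\}$.} Assume $\phi(1)\le 2$. By Proposition~\ref{weakSLC_properties}, item vi), \weakphiSLCone{} is equivalent to \phiAGone{}. By Theorem~\ref{MainWeightedCharacterization}, \phiAGone{} is equivalent to \weightedweakphiSLCone{}. Chaining these shows the two weak properties coincide for every semi-normalized Markushevich basis.

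\emph{Strictness when $\phi(1)\ge3$.} Here I would use the basis of Theorem~\ref{weakNotAGExample}. It is a normalized Schauder basis equivalent to the canonical basis of $\ell_1$, it is \weakphiSLCone{}, and it is not \phiAGone{}. Since \weightedweakphiSLCone{} would force \phiAGone{} by Theorem~\ref{MainWeightedCharacterization}, this basis is weak but not weighted weak. So the implication is strict, and equivalence for all bases fails.

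Combining the last two steps gives the ``if and only if'' with $\phi(1)\in\{1,2\}$. There is no real obstacle; the only point needing care is the Schauder claim. The canonical basis in Theorem~\ref{weakNotAGExample} is a Schauder basis because it is equivalent to the $\ell_1$ basis, and equivalence preserves that property.
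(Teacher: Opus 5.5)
Your proposal is correct and is essentially the paper's proof. You obtain the implication by specializing to $|a_n|=1$ in Definition~\ref{defWeightedWeakSLC}. For $\phi(1)\le 2$, you chain \weakphiSLCone{} $\Leftrightarrow$ \phiAGone{} with Theorem~\ref{MainWeightedCharacterization}; you cite Proposition~\ref{weakSLC_properties}, item vi), directly for the first equivalence, where the paper cites Theorem~\ref{M2}, which rests on that item. For $\phi(1)\ge 3$, you get strictness from the $\ell_1$ example of Theorem~\ref{weakNotAGExample} together with Theorem~\ref{MainWeightedCharacterization}.
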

\begin{proof}
Taking $|a_n|=1$ in Definition~\ref{defWeightedWeakSLC}
gives the first implication. If $\phi(1)\le2$, the converse
follows from Theorems~\ref{M2} and
\ref{MainWeightedCharacterization}. If $\phi(1)\ge3$,
Theorem~\ref{weakNotAGExample} supplies a basis satisfying
\weakphiSLCone{} but not \phiAGone{}, and hence not
\weightedweakphiSLCone{}.
\end{proof}

\appendix
\section{Equivalent descriptions of the enlargement functions}\label{phi_equiv}
\begin{proof}[Proof of Proposition~\ref{propPhiEquivalent}]
Suppose (i) holds. Since $\phi(0)=0$ and $\phi$ is strictly
increasing and integer-valued, $\phi(m)\ge m$ for all $m\ge0$.
Moreover, if $n\ge k$ and $\phi(n)\le m$, then strict monotonicity
at integer arguments yields
\[
 \phi(n-k)\ \le\ \phi(n)-k\ \le\  m-k,
\]
which is (ii).

Conversely, assume (ii), and set $\gamma(j)=\phi(j)-j$ for $j\ge0$.
It follows that $\gamma(j)\ge0$ and $\gamma(0)=0$. Given integers
$0\le a<b$, apply (ii) with $n=b$, $k=b-a$, and $m=\phi(b)$ to get
\[
 \phi(a)\ \le\ \phi(b)-(b-a),
 \qquad\text{hence}\qquad
 \gamma(a)\ \le\ \gamma(b).
\]
Therefore (iii) holds.

Finally, if (iii) holds, then for every $j\ge0$,
\[
 \phi(j+1)-\phi(j)
 \ =\ 1+\gamma(j+1)-\gamma(j)\ \ge\ 1.
\]
Thus $\phi$ is strictly increasing, and $\phi(0)=0$, proving (i).
\end{proof}

\section{Approximations by finitely supported vectors}
The following elementary finite-support reduction is used in several proofs.
\begin{lem}\label{FiniteSupportReduction}
Suppose $F\subset\mathbb N$ is finite, $x\in X$, and
$F\cap\supp(x)=\emptyset$. Then there are finitely supported vectors
$x_r$ with $F\cap\supp(x_r)=\emptyset$ such that
\[
 \|x_r-x\|\longrightarrow0
 \quad\text{and}\quad
 \|x_r-x\|_\infty\longrightarrow0.
\]
\end{lem}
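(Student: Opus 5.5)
The natural approach is to truncate the expansion of $x$ over a large initial segment. Then remove the coordinates in $F$ and correct for the error these coordinates introduce. The hypothesis $F\cap\supp(x)=\emptyset$ says $e_n^*(x)=0$ for $n\in F$. Removing those coordinates therefore costs nothing in the limit, provided the removal is done by the bounded operator $I-P_F$.

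First, since $\overline{\operatorname{span}}\{e_n\}=X$, I will pick finitely supported $w_r$ with $\|w_r-x\|<1/r$. I then set
\[
 x_r\ =\ w_r-P_F(w_r)\ =\ w_r-\sum_{n\in F}e_n^*(w_r)e_n .
\]
Each $x_r$ is finitely supported, and $e_n^*(x_r)=0$ for $n\in F$ by biorthogonality, so $F\cap\supp(x_r)=\emptyset$. For the norm estimate I will use $P_F(x)=0$, which gives
\[
 \|x_r-x\|\ =\ \|(I-P_F)(w_r-x)\|\ \le\ \Bigl(1+\sum_{n\in F}\|e_n^*\|\,\|e_n\|\Bigr)\|w_r-x\|.
\]
The constant here is finite because $F$ is finite and \eqref{MBoundedAssumptions} holds, so $\|x_r-x\|\to0$.

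Second, the $\|\cdot\|_\infty$ convergence follows from $\|y\|_\infty\le \sup_n\|e_n^*\|\,\|y\|$ for every $y\in X$, again by \eqref{MBoundedAssumptions}. Here $\|y\|_\infty$ is read as $\sup_n|e_n^*(y)|$, which is a maximum since the coefficient sequence lies in $c_0$. Applying this bound to $y=x_r-x$ gives $\|x_r-x\|_\infty\to0$.

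I expect no real obstacle. The only point needing care is that the ``$x_r$'' must avoid $F$ exactly, not approximately. This is why I subtract $P_F(w_r)$ rather than merely truncating. Boundedness of $P_F$ for finite $F$ is what makes this harmless.
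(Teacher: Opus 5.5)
Your proposal is correct and matches the paper's proof essentially step for step. The paper likewise approximates $x$ by finitely supported vectors, sets $x_r = z_r - P_F(z_r)$, bounds $\|x_r-x\|$ by $(1+\|P_F\|)\|z_r-x\|$ using $P_F(x)=0$, and gets the $\|\cdot\|_\infty$ convergence from $\sup_n\|e_n^*\|<\infty$.
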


\begin{proof}
Since $\overline{\operatorname{span}}\{e_n:n\in\mathbb N\}=X$, there exists a sequence of finitely supported
vectors $(z_r)$ such that $z_r\to x$ in norm.

For each $r\in\mathbb N$, define $x_r = z_r-P_F(z_r)$.
Clearly, each $x_r$ is finitely supported, and $F\cap\operatorname{supp}(x_r)=\emptyset$
for every $r\in\mathbb N$.

Next, we show that $x_r\to x$ in norm.
Since $F$ is finite and each coordinate functional
$e_n^*$ is continuous, for every $y\in X$, we have
\begin{align*}
\|P_F(y)\|
&\ \le\ \sum_{n\in F}|e_n^*(y)|\|e_n\|\\
&\ \le\ \left(\sum_{n\in F}
       \|e_n^*\|\|e_n\|\right)\|y\|.
\end{align*}
Thus, $P_F$ is a bounded linear operator.

Since $F\cap\operatorname{supp}(x)=\emptyset$,
we have $P_F(x)=0$. Consequently,
\begin{align*}
\|x_r-x\|
&\ =\ \|z_r-P_F(z_r)-x\|\\
&\ =\ \|(z_r-x)-P_F(z_r-x)\|\\
&\ \le\ (1+\|P_F\|)\|z_r-x\|
\ \longrightarrow\ 0.
\end{align*}

Finally, since the coordinate functionals are
uniformly bounded, we have
\[
M\ :=\ \sup_{n\in\mathbb N}\|e_n^*\|\ <\ \infty.
\]
It follows that
$$\|x_r-x\|_\infty\ =\ \sup_{n\in\mathbb N}|e_n^*(x_r-x)|\ \le\ M\|x_r-x\|
\ \longrightarrow\  0.$$
This completes the proof.
\end{proof}

\end{document}